\newtheorem{theorem}{Theorem}[section]
\newtheorem{lemma}[theorem]{Lemma}
\newtheorem{corollary}[theorem]{Corollary}
\theoremstyle{definition}
\newtheorem{definition}[theorem]{Definition}
\newtheorem{proposition}[theorem]{Proposition}
\theoremstyle{remark}
\newtheorem{remark}[theorem]{Remark}
\numberwithin{equation}{section}
\newcommand{\R}{\mathbb{R}}
\newcommand{\WF}{\mathrm{WF}_{\mathrm{a}}}
\newcommand{\Beq}{\begin{equation}}
\newcommand{\Eeq}{\end{equation}}
\newcommand{\beq}{\begin{equation*}}
	\newcommand{\eeq}{\end{equation*}}
\newcommand{\bal}{\begin{align}}
	\newcommand{\eal}{\end{align}}
\newcommand{\bp}{\begin{prob}}
	\newcommand{\ep}{\end{prob}}
\newcommand{\bpr}{\begin{proof}}
	\newcommand{\epr}{\end{proof}}
\newcommand{\bel}[1]{\begin{equation}\label{#1}}
\newcommand{\ee}{\end{equation}}
\newcommand{\bsf}{\boldsymbol{f}}
\newcommand\reallywidehat[1]{%
\savestack{\tmpbox}{\stretchto{%
  \scaleto{%
    \scalerel*[\widthof{\ensuremath{#1}}]{\kern-.6pt\bigwedge\kern-.6pt}%
    {\rule[-\textheight/2]{1ex}{\textheight}}%WIDTH-LIMITED BIG WEDGE
  }{\textheight}% 
}{0.5ex}}%
\stackon[1pt]{#1}{\tmpbox}%
}
\begin{document}

\title[Matrix Weighted double fibration transforms]{The matrix weighted real-analytic double fibration transforms}

\author[H. Chihara]{Hiroyuki Chihara}
\address{College of Education, University of the Ryukyus, Nishihara, Okinawa 903-0213, Japan}
\email{hc@trevally.net}

\author[S. R. Jathar]{Shubham R. Jathar}

\address{Computational Engineering, School of Engineering Sciences,
Lappeenranta-Lahti University of Technology LUT, Lappeenranta, Finland}

\email{shubham.jathar@lut.fi}

\author[J. Railo]{Jesse Railo}
\address{ Computational Engineering, School of Engineering Sciences,
Lappeenranta-Lahti University of Technology LUT, Lappeenranta, Finland \& Department of Mathematics, Stanford University, Stanford, CA, USA}
\email{jesse.railo@lut.fi; railo@stanford.edu}

%    General info
\subjclass[2020]{Primary 44A12; Secondary 53C65, 58J40, 45Q05}

\keywords{Radon transform, geodesic ray transform, matrix weight, analytic microlocal analysis}

\begin{abstract} We show that the real-analytic matrix-weighted double fibration transform determines the analytic wavefront set of a vector-valued function. We apply this result to show that the matrix weighted ray transform is injective on a two-dimensional, non-trapping, real-analytic Riemannian manifold with strictly convex boundary. Additionally, we show that a real-analytic Higgs field can be uniquely determined from the nonabelian ray transform on real-analytic Riemannian manifolds of any dimension with a strictly convex boundary point.

\end{abstract}

\maketitle
\section{Introduction}\label{sec:Intro}

Let $(M,g)$ be a compact, oriented Riemannian manifold of dimension $n \geq 2$ with strictly convex boundary, and let $SM$ denote its unit tangent bundle. For a smooth function $f \in C^\infty(M)$, the \emph{geodesic X-ray transform} is defined by
\[
R f (z) = \int_{0}^{\tau_+(z)} f\big(x_z(t)\big)\, \mathrm{d}t, \qquad z \in \partial_+ SM,
\]
where $x_z$ is the unit-speed geodesic determined by the initial data $z$, and $\tau_+(z)$ is the forward travel time of $x_z$. In the Euclidean case, this reduces to the classical X-ray transform, and in two dimensions, it coincides with the Radon transform \cite{radon_uber_1917}. A matrix weighted generalization of the geodesic X-ray transform is given by
\begin{equation}
    R_W \bsf(z) = \int_{0}^{\tau_+(z)} W\big(x_z(t), \dot{x}_z(t)\big) \bsf\big(x_z(t)\big)\, \mathrm{d}t,
\end{equation}
where $\bsf \in C^\infty(M, \mathbb{C}^N)$ and $W \in C^\infty(SM, \mathrm{GL}(N, \mathbb{C}))$ is a smooth matrix weight. 

A notable special case is the \emph{attenuated ray transform}, where the weight arises from a matrix-valued attenuation $\mathcal{A} \in C^\infty(SM, \mathbb{C}^{N \times N})$. The associated weight $W_{\mathcal{A}}$ is defined as the solution to the transport equation
\begin{align}
    \begin{split}
        \frac{\mathrm{d}}{\mathrm{d}t} W_{\mathcal{A}}\big(x_z(t), \dot{x}_z(t)\big) + \mathcal{A}\big(x_z(t), \dot{x}_z(t)\big) W_{\mathcal{A}}\big(x_z(t), \dot{x}_z(t)\big) &= 0, \\
      W_{\mathcal{A}}|_{\partial_+ SM} &= \mathrm{Id}.
    \end{split}
\end{align}
The attenuated ray transform with attenuation $\mathcal{A}$ is then defined as $R_{W_{\mathcal{A}}^{-1}}$. The associated inverse problem asks whether one can recover $f$ from its (weighted) integrals over a family of geodesics.  For a recent survey on inverse problems for ray transforms, including weighted and general flow cases, we refer to \cite{Zhou:2024:survey, jathar2024twisted_survey}.

Recently, \cite{mazzucchelli2023general} studied the weighted geodesic ray transform and related transforms on real-analytic manifolds using the framework of double fibration transforms. In this article, we consider the matrix weighted version of these problems and their applications. First, we show that both the matrix weighted ray transform and the attenuated ray transform can be viewed as special cases of the matrix weighted double fibration ray transform. We prove that the real-analytic matrix weighted double fibration transform determines the analytic wavefront set of a vector-valued function. This result allows us to establish local injectivity for the matrix weighted ray transform and the attenuated ray transform. Using a layer stripping argument, we further obtain global injectivity results under the assumption that the manifold is real-analytic, two-dimensional, has strictly convex boundary, and non-trapping geodesic flow. As an application, we show that an arbitrary real-analytic Higgs field can be recovered from the corresponding nonabelian ray transform on a real-analytic manifold of dimension $n \geq 2$. Notably, for this result, we only assume that the boundary is strictly convex at a single point.
\subsection{Related literature}
These questions may be generalized to the case where $f$ is replaced by a symmetric tensor field, known as \emph{tensor tomography}. A particularly tractable setting is that of a \emph{simple manifold}, characterized by non-trapping geodesic flow, absence of conjugate points, and strictly convex boundary. Injectivity results for the geodesic ray transform on simple surfaces include, for example, functions \cite{Muhometov:1977}, symmetric $2$-tensor fields \cite{Sharafutdinov:2007}, and symmetric tensor fields of arbitrary order \cite{Paternain_Salo_Uhlmann2013}. Using analytic microlocal analysis, support theorems have been proved for the geodesic ray transform on real-analytic simple manifolds: for functions in \cite{Krishnan:2009}, and for symmetric $2$-tensor fields in \cite{Krishnan:Stefanov:2009}, with a remark that the argument extends to arbitrary tensor fields with minor modifications.

The study of matrix weighted transforms dates back at least to \cite{Vertgeu:1991}, using techniques from \cite{Muhometov:1977}. For scalar attenuations on simple surfaces, the attenuated ray transform is injective and admits stable inversion \cite{Salo_Uhlmann:2011}. In a similar setting, \cite{Monard:2016} establishes Fredholm-type inversion formulas for the attenuated geodesic X-ray transform acting on either functions or vector fields. Furthermore, \cite{Assylbekov:Monard:Uhlmann:2018} characterizes the full range and provides exact reconstruction formulas for the transform applied to the sum of a function and a one-form. For real-analytic simple manifolds, injectivity and stability results for scalar attenuations acting on pairs consisting of symmetric $2$-tensor fields and vector fields are obtained in \cite{Assylbekov:2020}. Using analytic microlocal methods, \cite{Homan:Zhou:2017} proves injectivity and stability for a generic class of generalized Radon transforms that integrate over analytic hypersurfaces on compact analytic Riemannian manifolds with boundary, under the Bolker condition.

It is conjectured in \cite{Paternain_Salo_Uhlmann2013} that the geodesic ray transform is injective on simply connected, strictly convex surfaces with non-trapping geodesic flow. In higher dimensions ($n \geq 3$), injectivity for symmetric $m$-tensor fields with $m \geq 2$ remains open on simple manifolds. The attenuated ray transform is known to be unstable in the presence of conjugate points on surfaces \cite{fold_caustic, Holman:Monard:Stefanov:2018}. On manifolds admitting a strictly convex foliation, global injectivity and stability for $R_W$ in dimensions $n \geq 3$ follow from scattering calculus arguments \cite{Paternain_Salo_Uhl_Zhou:2019}, developed in \cite{Uhlmann:Vasy:2016, Stefanov:Uhlmann:Vasy:2018}. Related developments include \cite{Vasy:Zachos:2024, Vasy:2024, Rafe:Monard:2024}. Injectivity for piecewise constant functions on non-trapping manifolds was established in \cite{Ilmavirta:Lehtonen:Salo:2020}, and extended to the case of matrix weights in \cite{Ilmavirta:Railo:2020}. For further results, see \cite{Todd:1980, Novikov:2002:inv, Boman:1993, Bohr_Paternain:2023, Jathar:loop}.

The nonlinear inverse problem of the injectivity of the \emph{nonabelian ray transform} is related to the attenuated ray transform via pseudo-linearization. This transform has applications in polarimetric neutron tomography \cite{Desai_et_all:2020, hilger2018tensorial}, soliton theory \cite{Manakov_Zakharov:1981, Ward:1988}, and coherent quantum tomography \cite{Ilmavirta:2016}; see also \cite{Novikov:2019}. In \cite{Novikov:2002}, reconstruction of a connection up to gauge in $\mathbb{R}^n$ ($n \geq 3$) is shown, along with a counterexample for $n = 2$. Under compact support assumptions, injectivity up to gauge in $\mathbb{R}^2$ is proved in \cite{Eskin:2004}. 

For simple manifolds, injectivity with Hermitian connections is shown in \cite{Sharafut:2000}, and local uniqueness under small $C^1$ norm assumptions is given in \cite{Finch_Uhlmann:2001}. Injectivity up to natural gauge for attenuations given by a unitary connection plus a skew-Hermitian Higgs field on simple surfaces is proved in \cite{Paternain:Salo:Uhlmann:2012}, and extended to general connections and Higgs fields in \cite{Paternain:Salo:2020:a}. On negatively curved manifolds of any dimension, injectivity is shown in \cite{Guillarmou:Paternain:Salo:Uhlmann:2016}. For real-analytic simple manifolds, generic injectivity holds for arbitrary real-analytic invertible weights \cite{Zhou:2017}. Using Carleman estimates, injectivity for general connections and Higgs fields on negatively curved manifolds is established in \cite{paternain2021carleman}. The stability and statistical analysis of the nonabelian ray transform are studied on simple surfaces \cite{Monard:Nickl:Paternain:2021} and in higher dimensions \cite{bohr2021stability,Bohr:Nickl:2024}. For recent related results, see for instance \cite{Chihara:2026,busch2025_1,busch2025}.

\subsection{Main results}\label{subsec:main}
We briefly recall the mathematical framework of our work. Further details are provided in the subsequent sections. Let $\mathcal{G}$ and $X$ be smooth, oriented manifolds of dimensions $m$ and $n$, respectively. Let $Z$ be an oriented, embedded submanifold of $\mathcal{G} \times X$, with projections $\pi_{\mathcal{G}}: Z \rightarrow \mathcal{G}$ and $\pi_X: Z \rightarrow X$. We assume that $m \geq n$ and
\[
\dim(Z) = m + n' \quad \text{for some } 1 \leq n' \leq n-1,
\]
and that both $\pi_{\mathcal{G}}$ and $\pi_X$ are \emph{submersions}. Such $Z$ is called a \emph{double fibration}. For each $z \in \mathcal{G}$, the fiber $\pi_\mathcal{G}^{-1}(z)$ and its image in $X$ are both $n'$-dimensional. We define
\begin{equation}\label{eq:G_z}
    G_z := \pi_X(\pi_\mathcal{G}^{-1}(z)) \subset X.
\end{equation}
Likewise, for each $x \in X$, the fiber $\pi_X^{-1}(x)$ is $(m - n'')$-dimensional, and its image in $\mathcal{G}$ is
\[
H_x := \pi_\mathcal{G}(\pi_X^{-1}(x)) \subset \mathcal{G},
\]
where $n'' := n - n'$. Throughout, we use the notation
\[
\mathrm{GL}(N, \mathbb{C}) := \left\{\, B \in \mathbb{C}^{N \times N} \mid \det(B) \neq 0 \,\right\}
\]
to denote the general linear group of invertible \(N \times N\) complex matrices.

We generalize \cite[Definition~1.5]{mazzucchelli2023general} to include matrix weights instead of scalar weights. 
\begin{definition}\label{def:double_fibration}
Let $\omega_{G_z}$ be the induced orientation form on $G_z$. Let $W \in C^\infty(\mathcal{G} \times X, \mathrm{GL}(N,\mathbb{C}))$ be a smooth, matrix-valued function. The linear map $R_W: C_c^{\infty}(X,\mathbb{C}^N) \rightarrow C^{\infty}(\mathcal{G},\mathbb{C}^N)$ defined by
\begin{equation}\label{eq:R_W_def}
    R_W \bsf(z) = \int_{G_z} W(z, x) \bsf(x) \, \mathrm{d}\omega_{G_z}(x), \quad z \in \mathcal{G}
\end{equation}
is called a \emph{matrix weighted double fibration transform}. We say that $R_W$ is \emph{real-analytic} if all related objects 
($\mathcal{G}, X, Z, \omega_{G_z}, W$) are real-analytic.
\end{definition}

Let $\Sigma \subset X$ be a smooth submanifold. The \emph{conormal bundle} of $\Sigma$, denoted by $N^{*}\Sigma$, is defined as
\[
N^{*}\Sigma = \left\{\,(x,\xi)\in T^{*}X \setminus 0 \,\middle|\, x\in\Sigma,\; \xi(v)=0 \text{ for all } v\in T_{x}\Sigma \, \right\}.
\]
Similar to~\cite[Theorem~2.2]{mazzucchelli2023general}, the delta distribution supported on $Z$ is an oscillatory integral, implying that $R_W$ is a \emph{Fourier integral operator} with the canonical relation
\begin{align}\label{eq:twisted_conormal}
    C :=(N^* Z \backslash\{0\})'= \left\{\,(z,\zeta,x,\eta) \,\middle|\, (z,\zeta,x,-\eta) \in N^*Z \setminus \{0\}\, \right\}.
\end{align}
We define the \emph{left} and \emph{right} projections
\[
\pi_L \colon C \to T^*\mathcal{G} \setminus 0, \qquad 
\pi_R \colon C \to T^*X \setminus 0.
\]
We say that the \emph{Bolker condition} holds at $(z,\zeta,x,\eta)\in C$ if $\pi_L^{-1}(z,\zeta) = \{(z,\zeta,x,\eta)\}$ and $\mathrm{d}\pi_L|_{(z,\zeta,x,\eta)}$ is injective.

The following theorem is a generalization of \cite[Theorem~1.2]{mazzucchelli2023general} to the matrix weighted case.

\begin{theorem}\label{tm:main}
Let $R_W$ be a real-analytic matrix weighted double fibration transform as in Definition~\ref{def:double_fibration}. Suppose that the Bolker condition holds at $(\hat{z}, \hat{\zeta}, \hat{x}, \hat{\eta}) \in C$. Then, for any $\textbf{f} \in \mathscr{E}'(X,\mathbb{C}^N)$, the following implication holds:
\[
(\hat{z}, \hat{\zeta}) \notin \mathrm{WF}_{\mathrm{a}}(R_W\textbf{f})
\quad \Longrightarrow \quad
(\hat{x}, \hat{\eta}) \notin \mathrm{WF}_{\mathrm{a}}(\textbf{f}),
\]
where $\mathrm{WF}_{\mathrm{a}}$ denotes the analytic wavefront set.
\end{theorem}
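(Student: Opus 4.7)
The plan is to interpret $R_W$ as a real-analytic, matrix-valued Fourier integral operator (FIO) with canonical relation $C$, construct a microlocal left parametrix that exploits the pointwise invertibility of $W$, and then conclude via the pseudolocal property of analytic FIOs. The argument follows the scalar case \cite[Theorem~1.2]{mazzucchelli2023general}; the only substantively new ingredient is accommodating a matrix-valued amplitude.

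First I would write the Schwartz kernel of $R_W$ as the product of $W(z,x)$ with the delta distribution $\delta_Z$ supported on $Z \subset \mathcal{G} \times X$, just as in \cite[Theorem~2.2]{mazzucchelli2023general}. Since $Z$ is a real-analytic submanifold of codimension $n'' = n - n'$, the distribution $\delta_Z$ admits a local oscillatory-integral representation with real-analytic phase function parameterizing $N^*Z$; multiplication by the analytic matrix $W$ then presents $R_W$ as a matrix-valued analytic FIO associated with the canonical relation $C$ from \eqref{eq:twisted_conormal}. Its principal symbol is, up to a nowhere-vanishing analytic scalar factor coming from $\omega_{G_z}$ and the usual half-density, the pullback of $W$ to $C$. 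Under the Bolker condition at $(\hat{z},\hat{\zeta},\hat{x},\hat{\eta})$, a dimension count combined with the Lagrangian nature of $C$ upgrades injectivity of $\mathrm{d}\pi_L$ to $\pi_L$ being a local real-analytic diffeomorphism, so $C$ is locally the graph of an analytic canonical transformation $\chi$ with $\chi(\hat{z},\hat{\zeta}) = (\hat{x},\hat{\eta})$.

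Next I would construct a microlocal left parametrix $Q$: a matrix-valued analytic FIO associated with $C^{-1}$, whose principal symbol is the matrix inverse $W^{-1}$ (well-defined because $W$ takes values in $\mathrm{GL}(N,\mathbb{C})$), appropriately pulled back to $C^{-1}$ and normalized by the reciprocal of the scalar factor above. Because $\pi_L$ is a local diffeomorphism the two canonical relations intersect transversally, so the analytic FIO composition calculus applies, and $Q R_W$ is a matrix-valued analytic pseudodifferential operator of order zero with principal symbol the identity on a conic neighborhood of $(\hat{x},\hat{\eta})$. An analytic Neumann iteration in the framework of Sj\"ostrand's analytic pseudodifferential calculus then refines $Q$ so that $Q R_W - \mathrm{Id}$ is analytic regularizing near $(\hat{x},\hat{\eta})$; that is, $(\hat{x},\hat{\eta}) \notin \WF((Q R_W - \mathrm{Id})\bsf)$ for every $\bsf \in \mathscr{E}'(X,\mathbb{C}^N)$.

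Finally, by the analytic pseudolocality of $Q$, the set $\WF(Q u)$ is contained in the image of $\WF(u)$ under $C^{-1}$. The hypothesis $(\hat{z},\hat{\zeta}) \notin \WF(R_W \bsf)$ together with $\chi(\hat{z},\hat{\zeta}) = (\hat{x},\hat{\eta})$ then forces $(\hat{x},\hat{\eta}) \notin \WF(Q R_W \bsf)$, and the parametrix identity yields $(\hat{x},\hat{\eta}) \notin \WF(\bsf)$, as desired. The principal obstacle is executing the parametrix construction with matrix-valued symbols in the real-analytic category, since the Neumann series must converge uniformly on conic sets in an analytic symbol norm. This reduces to verifying that matrix inversion preserves the Sj\"ostrand-type analytic symbol estimates, which is immediate from the fact that $A \mapsto A^{-1}$ is real-analytic on $\mathrm{GL}(N,\mathbb{C})$ and, on the relevant conic neighborhoods, $W$ remains in a uniformly nonsingular compact subset so that its inverse stays bounded together with all its derivatives in the required analytic fashion.
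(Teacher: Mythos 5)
Your proposal takes a genuinely different route from the paper --- an analytic FIO parametrix construction rather than the paper's direct FBI-transform argument --- but it contains a concrete error and leans on machinery that is not available in the required generality. The dimension count goes the wrong way: $C=(N^*Z\setminus 0)'$ has dimension $m+n$, while $T^*\mathcal{G}\setminus 0$ has dimension $2m$, and since $m\geq n$ with equality only in special cases (e.g.\ the surface case $n=2$ of the geodesic transform; for $n\geq 3$ one has $m=2n-2>n$), the map $\pi_L$ is generically an injective immersion onto a positive-codimension submanifold of $T^*\mathcal{G}$. Hence the Bolker condition does \emph{not} upgrade to ``$\pi_L$ is a local diffeomorphism,'' and $C$ is \emph{not} locally the graph of a canonical transformation $\chi$; there is no well-defined $\chi(\hat z,\hat\zeta)=(\hat x,\hat\eta)$. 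Consequently $QR_W$ is not a transversal graph composition but a clean-intersection composition (Guillemin's normal-operator setting), and in the real-analytic category there is no off-the-shelf clean composition calculus or convergent Neumann-series parametrix for such FIOs. That missing calculus --- not the matrix inversion of the symbol, which is indeed easy --- is the real obstruction, and it is precisely what the paper is structured to circumvent (the authors note that the normal-operator route of Zhou requires global simplicity).

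The paper's actual proof avoids FIO composition entirely. It first splits $\bsf=\psi\bsf+(1-\psi)\bsf$ and uses the vector-valued H\"ormander wavefront mapping bound (Theorem~\ref{thm:8.5.5-vector}) together with the injectivity half of the Bolker condition to reduce to a compactly supported $\psi\bsf$ in a single analytic chart. It then applies the FBI transform to $R_W(\psi\bsf)$, performs analytic stationary phase in the fiber variable $w'$ (Sj\"ostrand's steepest descent, via Lemma~\ref{theorem:MST}, which supplies a phase $\psi(y,z,\zeta)$ with $\operatorname{Im}\psi\geq c\lvert y-\pi(\mu(z,\zeta))\rvert^2$), substitutes the analytic right inverse $\mu^{+}$ of $\mu=\pi_R\circ\pi_L^{-1}$, and reads off $(\hat x,\hat\eta)\notin\WF\bigl(W_3(\hat x,\hat\eta,\cdot)\bsf\bigr)$ from Sj\"ostrand's characterization of the analytic wavefront set (Theorem~\ref{thm:FBI}); only in the final step does invertibility of the matrix weight enter, which is the one place where your ``principal obstacle'' actually lives and where your observation is correct. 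To salvage a parametrix-style argument you would have to pass to the normal operator $R_W^*R_W$ and prove it is an elliptic analytic pseudodifferential operator, which reintroduces exactly the global hypotheses the theorem is designed to avoid.
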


Using this theorem, we can prove a local uniqueness result for the real-analytic matrix weighted double fibration transform. In the scalar case, this result appears as \cite[Theorem~1.3]{mazzucchelli2023general}.

\begin{theorem}\label{tm:1.3}
Let $R_W$ be a real-analytic matrix weighted double fibration transform as in Definition~\ref{def:double_fibration}. Suppose that the Bolker condition holds at $(z_0, \zeta_0, x_0, \nu_0) $ $\in C$. 
Suppose that $\Sigma$ is a $C^1$ hypersurface in $X$ such that 
$(x_0, \nu_0) \in N^* \Sigma$. Let $\bsf \in \mathscr{E}'(X, \mathbb{C}^N)$ 
be a distribution that vanishes on one side of $\Sigma$ in a neighborhood of $x_0$. 
If $R_W \bsf(z) = 0$ for $z$ in a neighborhood of $z_0$, then there exists a neighborhood $V_{x_0} \subset X$ of $x_0$ such that $\bsf = 0$ in $V_{x_0}$.
\end{theorem}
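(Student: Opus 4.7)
The plan is to combine Theorem~\ref{tm:main} with a classical boundary-propagation result for the analytic wavefront set; essentially all of the microlocal work is already in Theorem~\ref{tm:main}, and Theorem~\ref{tm:1.3} follows as a corollary.

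First, since $R_W\bsf \equiv 0$ in a neighborhood of $z_0$, this quantity is (trivially) real-analytic there, so $(z_0,\zeta_0)\notin \WF(R_W\bsf)$. The analytic wavefront set of a $\mathbb{C}^N$-valued distribution is, by definition, the union of the wavefront sets of its scalar components. Applying Theorem~\ref{tm:main} componentwise (once for each coordinate $f_j$ of $\bsf = (f_1,\ldots,f_N)$) yields $(x_0,\nu_0)\notin \WF(f_j)$ for every $j$.

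Second, I would invoke the Kashiwara watermelon theorem (see, e.g., Hörmander, \emph{The Analysis of Linear Partial Differential Operators}, Vol.~I, Theorem~8.5.6$'$): if a scalar distribution $f$ vanishes on one side of a $C^2$ hypersurface $\Sigma$ in a neighborhood of $x_0$ and some conormal direction $(x_0,\nu_0)\in N^*\Sigma$ is not in $\WF(f)$, then $f$ vanishes in a full neighborhood of $x_0$. By hypothesis, $\bsf$ vanishes on one side of $\Sigma$ near $x_0$, and this passes tautologically to each component $f_j$. Applying the watermelon theorem to each $f_j$ produces an open neighborhood $V_j\subset X$ of $x_0$ on which $f_j \equiv 0$. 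Setting $V_{x_0} := \bigcap_{j=1}^N V_j$ gives a neighborhood of $x_0$ on which $\bsf \equiv 0$, as required.

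I do not anticipate a serious obstacle. The nontrivial analytic-microlocal input is packaged entirely inside Theorem~\ref{tm:main}, while the vector-valued reduction is immediate from the componentwise definition of the analytic wavefront set. The only step that merits mention is the invocation of the watermelon theorem, which is a standard tool for converting microlocal analyticity at a conormal direction into local vanishing under one-sided vanishing hypotheses.
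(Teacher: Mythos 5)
Your proposal is correct and follows essentially the same route as the paper: deduce $(z_0,\zeta_0)\notin\WF(R_W\bsf)$ from the vanishing of the data, apply Theorem~\ref{tm:main} to get $(x_0,\nu_0)\notin\WF(\bsf)$, and then conclude by the componentwise application of H\"ormander's Theorem~8.5.6$'$, which the paper packages as the vector-valued Theorem~\ref{tm:6.1}. The only cosmetic difference is that Theorem~\ref{tm:main} is already stated for $\mathbb{C}^N$-valued distributions, so a single application suffices before passing to components.
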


\subsection{Applications}
Let $(M,g)$ be a smooth compact manifold of dimension $n \geq 2$ with boundary, and let $\pi \colon \Xi \to M$ be a smooth fiber bundle whose fibers $\Xi_x$ have no boundary. Let $\mathcal{Y}$ be a smooth vector field on $\Xi$ generating the flow $\Phi_t$, and define its horizontal projection $\mathcal{Y}^h := d\pi \circ \mathcal{Y} \subset TM$. For each $z \in \Xi$, define
\begin{equation}
    x_z(t) := \pi\big(\Phi_t(z)\big), \qquad -\tau_-(z) \leq t \leq \tau_+(z),
\end{equation}
where $\tau_-(z)$ and $\tau_+(z)$ denote the backward and forward travel times, respectively.

\begin{definition}\label{def:admi_curve}
Fix an initial-data manifold $\mathcal{G} \subset \Xi$ transversal to $\mathcal Y$. For a point $z \in \mathcal{G}$, the corresponding curve $x_z$ such that $(x_z(0), \dot{x}_z(0)) = z$ is called \emph{admissible} if the following conditions hold:
\begin{enumerate}
    \item[\hypertarget{admi:i}{(i)}] $x_z(t) \in \mathrm{int}(M)$ for all $t \in (-\tau_-(z), \tau_+(z))$;
    \item[\hypertarget{admi:ii}{(ii)}] $t \mapsto x_z(t)$ is injective (no self-intersections);
    \item[\hypertarget{admi:iii}{(iii)}] $\dot{x}_z(t) \neq 0$ for all $t$;
    \item[\hypertarget{admi:iv}{(iv)}] $\tau_\pm(z) < \infty$ (non-trapping);
    \item[\hypertarget{admi:v}{(v)}] if $\dim \mathcal{G} \leq \dim \Xi - 2$, then $V_z(t) + \mathbb{R} \mathcal Y^h = T_{x_z(t)}M$ for all $t$, where
    \[
    V_z(t) := \left\{\, d\pi\left(d\Phi_t(w)\right) \mid w \in T_z \mathcal{G} \,\right\}.
    \]
\end{enumerate}
\end{definition}

Define
\begin{equation}\label{def:Z}
    Z := \left\{\, (z, x_z(t)) \in \mathcal{G} \times M \mid x_z \text{ is an admissible curve} \,\right\}.
\end{equation}
The map $F \colon \mathcal{G} \times \mathbb{R} \to \mathcal{G} \times M$, $F(z,t) = (z, x_z(t))$, is smooth. By \cite[Lemma~4.3(iv)]{mazzucchelli2023general}, conditions \hyperlink{admi:i}{(i)}–\hyperlink{admi:iv}{(iv)} in Definition~\ref{def:admi_curve} imply that $F$ is an injective immersion with closed image in $\mathcal{G} \times M$, so $Z$ is an embedded submanifold. If condition \hyperlink{admi:v}{(v)} also holds, then $Z$ is a double fibration \cite[Lemma~4.3(v)]{mazzucchelli2023general}.

\subsubsection{Matrix weighted geodesic X-ray transforms}

Let $(M,g)$ be a compact Riemannian manifold of dimension $n \geq 2$ with boundary, and assume that $M$ is strictly convex at a point $x_0 \in \partial M$ (see Definition~\ref{def:strict_convex}). Consider $\Xi := SM$ and let $\mathcal Y$ be the geodesic vector field as in Definition~\ref{def:admi_curve}. Let $z_0 = (x_0, v_0) \in \partial_+ SM \cap \partial_- SM = \partial_0 SM$, and let $\mathcal{G}$ be a neighborhood of $z_0$ in $\partial_+ SM$ (see the formula \eqref{eq:influx_out} for the definition of $\partial_\pm SM$). If $\mathcal{G}$ is chosen sufficiently small, then each $x_z$ is a short geodesic, ensuring that conditions \hyperlink{admi:i}{(i)}–\hyperlink{admi:iv}{(iv)} are satisfied. Since $\dim \mathcal{G} = \dim SM - 1$, condition \hyperlink{admi:v}{(v)} holds trivially. Thus, for all $z \in \mathcal{G}$, the curves $x_z$ are admissible.

Let $\omega_{G_z}$ be the orientation form induced on $G_z$ (recall~\eqref{eq:G_z}) and $W \in C^\infty(\mathcal{G} \times M, \mathrm{GL}(N, \mathbb{C}))$. Then
\begin{equation}\label{def_eq:attenuated_double}
    R_W \bsf(z) = \int_{G_z} W(z,x) \bsf(x) \, \mathrm{d}\omega_{G_z} = \int_0^{\tau_+(z)} W(z, x_z(t)) \bsf(x_z(t)) \, \mathrm{d}t,
\end{equation}
where $x_z$ is the unit-speed geodesic through $z \in \mathcal{G}$.

We say that $(M,g)$ is a \emph{real-analytic manifold} if $M$ admits a real-analytic atlas and the metric $g$ is real-analytic. In Proposition~\ref{prop:4.7}, we show that the matrix weighted geodesic ray transform is a matrix weighted double fibration transform. Using this, Theorem~\ref{tm:1.3}, and a vector-valued generalization of the microlocal analytic continuation result from \cite[Theorem~8.5.6']{hormanderiv} (proved as Theorem~\ref{tm:6.1}), we obtain:

\begin{theorem}\label{tm:local:matrix_we}
Let $(M, g)$ be a compact real-analytic Riemannian manifold of dimension $n \geq 2$ with boundary. Suppose that \(\partial M\) is strictly convex at a point \(x_0 \in \partial M\). Let \(z_0 = (x_0, v_0) \in \partial_0 SM\) and \(\mathcal{G}\) be a neighborhood of \(z_0\) in \(\partial_+ SM\).
If $W \in C^\infty(\mathcal{G} \times M, \mathrm{GL}(N, \mathbb{C}))$ is real-analytic and $R_W \bsf(z) = 0$ for all $z \in \mathcal{G}$, then there exists a neighborhood $V_{x_0} \subset M$ of $x_0$ such that $\bsf = 0$ in $V_{x_0}$.
\end{theorem}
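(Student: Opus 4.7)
The plan is to reduce to Theorem~\ref{tm:1.3} applied with the hypersurface $\Sigma=\partial M$ near $x_0$, using strict convexity at $x_0$ to supply a conormal direction at which the Bolker condition can be verified. First, I would embed $(M,g)$ into a slightly larger real-analytic Riemannian manifold $(\tilde M,\tilde g)$ (available via a real-analytic collar), extend $W$ real-analytically across $\partial M$ near $x_0$, and extend $\bsf$ by zero to $\tilde{\bsf}\in\mathscr{E}'(\tilde M,\mathbb{C}^N)$. Since the admissible curves $x_z$ for $z$ in a sufficiently small neighborhood $\mathcal G$ of $z_0$ in $\partial_+ SM$ remain inside $M$, the extended transform acting on $\tilde{\bsf}$ agrees with $R_W\bsf$ on $\mathcal G$ and therefore vanishes there; by Proposition~\ref{prop:4.7} this extended operator is still a matrix weighted double fibration transform in the sense of Definition~\ref{def:double_fibration}.

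Next, I would identify a point $(z_0,\zeta_0,x_0,\nu_0)\in C$ with $\nu_0$ conormal to $\Sigma$ at $x_0$. Since $z_0=(x_0,v_0)\in\partial_0 SM$, the velocity $v_0$ is tangent to $\partial M$ at $x_0$, so the metric dual $\nu_0$ of the unit outward normal to $\partial M$ at $x_0$ satisfies $\nu_0(v_0)=0$, and is thus conormal to the geodesic $x_{z_0}$ at $t=0$. By the explicit description of the canonical relation of the geodesic double fibration (Proposition~\ref{prop:4.7}), there then exists $\zeta_0\in T^*_{z_0}\mathcal G$ with $(z_0,\zeta_0,x_0,\nu_0)\in C$. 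Strict convexity of $\partial M$ at $x_0$ ensures, after shrinking $\mathcal G$, that every admissible curve issuing from $\mathcal G$ is a short, non-trapping geodesic free of conjugate points; by the standard equivalence between the Bolker condition and the absence of conjugate points for the geodesic X-ray transform, the Bolker condition holds at $(z_0,\zeta_0,x_0,\nu_0)$.

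With this in hand, all hypotheses of Theorem~\ref{tm:1.3} are satisfied: $\Sigma=\partial M$ is a real-analytic, and in particular $C^2$, hypersurface with $(x_0,\nu_0)\in N^*\Sigma$; the distribution $\tilde{\bsf}$ vanishes on the exterior side of $\Sigma$ near $x_0$; and $R_W\tilde{\bsf}=0$ near $z_0$. Theorem~\ref{tm:1.3} then produces a neighborhood $V_{x_0}\subset\tilde M$ of $x_0$ on which $\tilde{\bsf}\equiv 0$, and restricting to $M$ completes the proof. The main obstacle is the verification of the Bolker condition at the boundary-grazing point $(z_0,\zeta_0,x_0,\nu_0)\in C$: this is the sole place where strict convexity enters essentially, guaranteeing both that short geodesics from $\mathcal G$ sweep through a neighborhood of $x_0$ in $M$ and that $\mathrm d\pi_L$ is injective at the relevant point. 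Internally, the invocation of Theorem~\ref{tm:1.3} relies on the vector-valued microlocal analytic continuation result Theorem~\ref{tm:6.1}, which, combined with Theorem~\ref{tm:main}, is what converts the absence of $(x_0,\nu_0)$ from $\WF(\tilde{\bsf})$ into local vanishing across $\Sigma$.
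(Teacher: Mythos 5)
Your proposal is correct and follows essentially the same route as the paper: extend $(M,g)$, $W$, and $\bsf$ (by zero) to a slightly larger real-analytic manifold, verify via Proposition~\ref{prop:4.7} that the extended transform is a matrix weighted double fibration transform satisfying the Bolker condition at a covector $(x_0,\nu_0)\in N^*\partial M$ (using that $v_0$ is tangent to $\partial M$ and that short geodesics have no conjugate points), and then apply Theorem~\ref{tm:1.3} with $\Sigma=\partial M$. The only cosmetic difference is that you spell out the Bolker verification, which the paper leaves implicit in its choice of extension.
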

This result establishes local injectivity of the matrix-weighted geodesic X-ray transform near a strictly convex boundary point on real-analytic manifolds.

From \eqref{def_eq:attenuated_double}, Theorem~\ref{tm:local:matrix_we}, and the strictly convex foliation constructed in \cite{BGL02}, together with a layer stripping argument, we obtain the following global result:

\begin{corollary}\label{tm:1.4}
Let $(M, g)$ be a compact Riemannian manifold of dimension $n = 2$ with strictly convex boundary and non-trapping geodesic flow. If $M$ and $g$ are analytic, then the matrix weighted geodesic X-ray transform, with a $\mathrm{GL}(N, \mathbb{C})$-valued real-analytic weight, acting on $\mathbb{C}^N$-valued functions on $(M, g)$, is injective.
\end{corollary}
This result was previously established for real-analytic simple manifolds in~\cite[Theorem~1.2]{Zhou:2017}. In that work, the author analyzes the normal operator, which necessitates the global simplicity assumption. This constitutes a key technical distinction between the present work and~\cite{Zhou:2017}.
For compact manifolds of dimension \(\dim M \geq 3\) with strictly convex boundary, assuming the existence of a smooth strictly convex function, a similar injectivity result is proved in~\cite{Paternain_Salo_Uhl_Zhou:2019}.

\subsubsection{Attenuated and nonabelian ray transforms}

A \emph{$\mathbb{C}^{N \times N}$-attenuation} $\mathcal{A}$ on $Z$ is given by an $N \times N$ matrix whose entries are smooth $\mathbb{C}$-valued functions on $Z$. Fix a geodesic curve $x_z \colon [0, \tau_+(z)] \to M$ with $(x_z(0),\dot x_z(0))=z$. To define the \emph{parallel transport} associated with $\mathcal{A}$ along $x_z$, we solve the linear system of ordinary differential equations (ODEs)
\begin{align}\label{eq:parallel_transport_ODE}
\begin{cases}
\frac{d}{dt} s(z, x_z(t)) + \mathcal{A}(z, x_z(t)) s(z, x_z(t)) &= 0, \\
s(z, x_z(\tau_+(z))) &= w \in \mathbb{C}^N.
\end{cases}
\end{align}
The unique solution $s(z, x_z(\cdot))$ defines the linear map
\[
P_{\mathcal{A}}(x_z) \colon \mathbb{C}^N \to \mathbb{C}^N, \quad P_{\mathcal{A}}(x_z)(w) := s(z, x_z(0)).
\]
This map is called the \emph{parallel transport} of $w$ along $x_z$ with respect to $\mathcal{A}$.

Alternatively, one may consider the \emph{fundamental matrix solution} 
\[
U(z, x_z(\cdot)) \colon [0, \tau_+(z)] \to \mathrm{GL}(N, \mathbb{C})
\]
of \eqref{eq:parallel_transport_ODE}, uniquely determined by
\begin{align}\label{eq:fundamental_matrix_solution}
\begin{cases}
\frac{d}{dt} U(z, x_z(t)) + \mathcal{A}(z, x_z(t)) U(z, x_z(t)) &= 0, \\
U(z, x_z(\tau_+(z))) &= \mathrm{Id}.
\end{cases}
\end{align}
Then $P_{\mathcal{A}}(x_z)(w) = U(z, x_z(0))\,w$ for all $w \in \mathbb{C}^N$ and $z \in \partial_+ SM$.

We define the \emph{scattering data} or \emph{nonabelian ray transform of the attenuation} as the map
\[
C_{\mathcal{A}} \colon \partial_+ SM \to \mathrm{GL}(N, \mathbb{C}), \quad C_{\mathcal{A}}(z) := U(z, x_z(0)),
\]
where $U(z, x_z(\cdot))$ is the fundamental matrix solution to \eqref{eq:fundamental_matrix_solution}. In other words, $C_{\mathcal{A}}(z)$ encodes the parallel transport along the geodesic starting from $z$ until it exits $M$.

\begin{definition}[Attenuated ray transform]\label{def:attenuated}
Let $(M, g)$ be a compact Riemannian manifold of dimension $n \geq 2$ with smooth boundary. The attenuated ray transform of $\bsf \in C^{\infty}(Z, \mathbb{C}^N)$ is defined by
\[
I_{\mathcal{A}} \bsf(z) := u^{\bsf}(z, \pi_M(z)), \qquad z \in \partial_+ SM,
\]
where $u^{\bsf} \colon Z \to \mathbb{C}^N$ is the solution to the transport equation
\begin{align*}
\frac{d}{dt} u^{\bsf}(z, x_z(t)) + \mathcal{A}(z, x_z(t)) u^{\bsf}(z, x_z(t)) &= -\bsf(z, x_z(t)), \\
u^{\bsf}(z, x_z(\tau_+(z))) &= 0.
\end{align*}
\end{definition}

We now state a local uniqueness result for the real-analytic attenuated ray transform.

\begin{theorem}\label{tm:local_attenuated}
Let $(M, g)$ be a compact real-analytic Riemannian manifold of dimension $n \geq 2$ with boundary. Suppose that \(\partial M\) is strictly convex at a point \(x_0 \in \partial M\). Let \(z_0 = (x_0, v_0) \in \partial_0 SM\) and \(\mathcal{G}\) be a neighborhood of \(z_0\) in \(\partial_+ SM\).
Suppose $\mathcal{A} \in C^\infty(\mathcal{G} \times M, \mathbb{C}^{N \times N})$ is a real-analytic matrix-valued function. If the attenuated ray transform $I_{\mathcal{A}} \bsf(z) = 0$ for all $z \in \mathcal{G}$, then there exists a neighborhood $V_{x_0} \subset M$ of $x_0$ such that $\bsf = 0$ in $V_{x_0}$.
\end{theorem}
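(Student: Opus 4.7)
My plan is to reduce the attenuated ray transform $I_\mathcal{A}$ to a matrix weighted double fibration transform $R_W$ and then invoke Theorem \ref{tm:local:matrix_we}. First I would introduce the fundamental matrix solution $U(z, x_z(\cdot))$ of \eqref{eq:fundamental_matrix_solution} and use $U^{-1}$ as an integrating factor in the transport equation for $u^\bsf$ in Definition \ref{def:attenuated}. A direct Duhamel-type computation, together with the boundary condition $u^\bsf(z, x_z(\tau_+(z))) = 0$, yields the identity
\begin{equation*}
I_\mathcal{A} \bsf(z) \;=\; C_\mathcal{A}(z) \int_0^{\tau_+(z)} U(z, x_z(t))^{-1}\, \bsf(x_z(t))\, \mathrm{d}t \;=\; C_\mathcal{A}(z)\, R_W \bsf(z),
\end{equation*}
valid for every $z \in \mathcal{G}$, where $W(z, x) := U(z, x)^{-1}$ on $Z$ and $C_\mathcal{A}(z) = U(z, x_z(0)) \in \mathrm{GL}(N, \mathbb{C})$. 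Since $C_\mathcal{A}(z)$ is pointwise invertible, the hypothesis $I_\mathcal{A}\bsf \equiv 0$ on $\mathcal{G}$ is equivalent to $R_W \bsf \equiv 0$ on $\mathcal{G}$.

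Next I would verify that this $W$ fits the framework of Theorem \ref{tm:local:matrix_we}. Real-analyticity of $(M, g)$ makes the geodesic flow on $SM$ real-analytic, and combined with the real-analyticity of $\mathcal{A}$, the linear ODE \eqref{eq:fundamental_matrix_solution} has real-analytic coefficients with real-analytic dependence on the parameter $z$. Standard analytic dependence for linear ODEs then yields that $U$, and hence $W = U^{-1}$, is a real-analytic $\mathrm{GL}(N, \mathbb{C})$-valued function on the real-analytic submanifold $Z \subset \mathcal{G} \times M$. Since only the local statement near $(z_0, x_0)$ is needed, one can work in local real-analytic coordinates in which $Z$ becomes an affine subspace and extend $W$ to a real-analytic $\mathrm{GL}(N, \mathbb{C})$-valued function on a neighborhood of $(z_0, x_0)$ in $\mathcal{G} \times M$; invertibility of the extension survives after shrinking the neighborhood, by openness of $\mathrm{GL}(N, \mathbb{C})$ in $\mathbb{C}^{N \times N}$. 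Applying Theorem \ref{tm:local:matrix_we} to $R_W$ with this extended weight then produces a neighborhood $V_{x_0} \subset M$ of $x_0$ on which $\bsf$ vanishes.

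The main conceptual step is the Duhamel reduction, which is classical; the only real technical point is producing a genuinely real-analytic (not merely smooth) extension of $W$ off of $Z$, since Definition \ref{def:double_fibration} requires the weight to be defined and real-analytic on the full product $\mathcal{G} \times M$. This is handled locally as indicated, or globally via the real-analytic tubular neighborhood theorem applied to the real-analytic embedded submanifold $Z$. Apart from this bookkeeping, the argument is a direct reduction of the attenuated ray transform to the already-established matrix weighted case of Theorem \ref{tm:local:matrix_we}.
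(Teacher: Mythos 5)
Your argument is, in essence, the paper's own proof: an integrating-factor (Duhamel) computation rewrites $I_{\mathcal{A}}\bsf$ as a matrix weighted ray transform whose weight is the inverse of a fundamental solution of the attenuation ODE, real-analyticity of that weight is obtained from analytic dependence of linear ODE solutions on analytic data (the paper proves this by explicit Cauchy majorant estimates in Lemmas~\ref{lm:weight_analytic} and~\ref{lm:weight_analytic:local}), and Theorem~\ref{tm:local:matrix_we} finishes.

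One point in your write-up is a genuine (though easily repaired) defect: you normalize the fundamental solution at the exit time, $U(z,x_z(\tau_+(z)))=\mathrm{Id}$, and then claim that $W=U^{-1}$ is real-analytic on $Z$ by ``standard analytic dependence for linear ODEs.'' That claim does not follow as stated, because the exit time $\tau_+$ is not real-analytic --- indeed not even smooth --- near the glancing region $\partial_0 SM$, and $z_0\in\partial_0 SM$ lies exactly there, so $\mathcal{G}$ necessarily contains glancing directions. Concretely, if $W_{\mathcal{A}}$ denotes the fundamental solution normalized at $t=0$ (which \emph{is} real-analytic, since its defining data involve no $\tau_+$), then $U(z,x)^{-1}=W_{\mathcal{A}}(z,x_z(\tau_+(z)))\,W_{\mathcal{A}}(z,x)^{-1}$, and the left factor carries the non-analytic $\tau_+$ dependence. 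The fix is exactly the paper's choice: normalize at $t=0$, so that $C_{\mathcal{A}}(z)\,R_{U^{-1}}\bsf(z)=R_{W_{\mathcal{A}}^{-1}}\bsf(z)$ and the offending prefactor cancels, leaving the genuinely real-analytic weight $W_{\mathcal{A}}^{-1}$. With that substitution your proof is complete; your remarks on extending the weight off $Z$ are sensible and, if anything, more careful than the paper's own treatment of that step.
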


A smooth matrix-valued function $\Phi\in C^{\infty}(M, \mathbb{C}^{N \times N})$ is called a \emph{Higgs field} on $M$. When $\Phi$ is viewed along geodesics, it defines an attenuation on $Z$ via $\Phi(z,x_z(t)):=\Phi(x_z(t))$.

 Finally, we state a \emph{global} uniqueness theorem for the real-analytic nonabelian ray transform, assuming strict convexity at a single boundary point and local data.
\begin{corollary}\label{tm:non_abelian}
Let $(M, g)$ be a compact real-analytic Riemannian manifold of dimension $n \geq 2$ with boundary. Suppose that \(\partial M\) is strictly convex at a point \(x_0 \in \partial M\). Let \(z_0 = (x_0, v_0) \in \partial_0 SM\) and \(\mathcal{G}\) be a neighborhood of \(z_0\) in \(\partial_+ SM\). Let $\Phi, \Psi \in C^{\infty}(M, \mathbb{C}^{N \times N})$ be real-analytic Higgs fields. If
\[
C_{\Phi}(z) = C_{\Psi}(z) \quad \text{for all } z\in \mathcal G,
\]
then $\Phi = \Psi$ in $M$.
\end{corollary}
\section{Preliminaries}\label{sec:pre}

We begin by recalling some notions and definitions, primarily following~\cite{hormanderi,hormanderiv,Duistermaat:FIO:book}, that will be used throughout the article.

Let \(X\) and \(Y\) be smooth manifolds of dimensions \(n_X\) and \(n_Y\), respectively, and let \(N \geq 1\). Throughout, we work with the trivial complex vector bundle \(\mathbb{C}^N\) over both \(X\) and \(Y\), rather than general vector bundles. Let \(\Omega_X^{1/2}\) and \(\Omega_Y^{1/2}\) denote the bundles of half-densities on \(X\) and \(Y\), respectively. We define the following function spaces:
\begin{align*}
    C^{\infty}_c(Y; \Omega_Y^{1/2} \otimes \mathbb{C}^N) 
    &:= \left\{\,\text{smooth, compactly supported sections of } \Omega_Y^{1/2} \otimes \mathbb{C}^N \,\right\}, \\
    % \mathscr{D}'(Y; \Omega_Y^{1/2} \otimes \mathbb{C}^N) 
    % &:= \mathcal{L}\left(C^{\infty}_c(Y; \Omega_Y^{1/2} \otimes \mathbb{C}^N), \mathbb{C}\right), \\
     \mathscr{D}'(Y; \Omega_Y^{1/2} \otimes \mathbb{C}^N) 
    &:= \left\{\, \text{continuous linear functionals on } C^{\infty}_c(Y; \Omega_Y^{1/2} \otimes \mathbb{C}^N)\,\right\}, \\
    \mathscr{E}'(Y; \Omega_Y^{1/2} \otimes \mathbb{C}^N) 
    &:= \left\{\, f \in \mathscr{D}'(Y; \Omega_Y^{1/2} \otimes \mathbb{C}^N) \mid \operatorname{supp}(f) \Subset Y \,\right\}.
\end{align*}
Fix coordinate patches \(U \subset Y\) with coordinates \(y = (y_1, \ldots, y_{n_Y})\) and \(V \subset X\) with coordinates \(x = (x_1, \ldots, x_{n_X})\), and choose the standard trivializing half-densities
\[
|dx|^{1/2} := \left|dx_1 \wedge \cdots \wedge dx_{n_X}\right|^{1/2}, \qquad 
|dy|^{1/2} := \left|dy_1 \wedge \cdots \wedge dy_{n_Y}\right|^{1/2}.
\]

For example, if \(\phi \in C^{\infty}_c(Y; \Omega_Y^{1/2} \otimes \mathbb{C}^N)\) has support contained in \(U\), then it can be uniquely written in the form
\[
\phi(y) = (\phi_1(y), \dots, \phi_N(y))^{\mathrm{T}} |dy|^{1/2},
\]
where each \(\phi_i \in C^{\infty}_c(Y)\) for \(1 \leq i \leq N\).

\begin{remark}
Let \(\mu\) be a fixed nowhere-vanishing smooth density on \(Y\). Then the map
\[
C^{\infty}_c(Y; \Omega_Y^{1/2} \otimes \mathbb{C}^N) \to C^{\infty}_c(Y; \mathbb{C}^N), \quad u \mapsto \mu^{-1/2} u
\]
is an isomorphism, with inverse given by \(f \mapsto f \mu^{1/2}\). This identification allows us to work with vector-valued functions instead of half-densities when convenient.
\end{remark}

For \(u \in \mathscr{D}'(Y; \Omega_Y^{1/2} \otimes \mathbb{C}^N)\), the duality pairing with \(\phi \in C^{\infty}_c(Y; \Omega_Y^{1/2} \otimes \mathbb{C}^N)\) can be written as
\begin{equation}\label{eq:duality}
    \langle u, \phi \rangle_Y = \sum_{j=1}^N u_j(\phi_j)
\end{equation}
where
\[
u_j(g) := u(g e_j), \quad \forall g \in C^{\infty}_c(Y; \Omega_Y^{1/2} \otimes \mathbb{C}),
\]
and \(\{e_j\}_{j=1}^N\) denotes the standard basis of \(\mathbb{C}^N\) over \(\mathbb{C}\).
If each \(u_j\) is represented by a locally integrable function \(f_j \in L^1_{\mathrm{loc}}(Y)\), then
\[
u_j(\phi_j) = \int_U f_j(y) \phi_j(y) \, |dy|.
\]
A submanifold \(L \subset T^*Y \setminus \{0\}\) is called \emph{conic} if \((y, \xi) \in L\) implies \((y, t\xi) \in L\) for all \(t > 0\).
\subsection{Analytic microlocal analysis}
We now recall the definition of the analytic wavefront set, denoted \(\WF\), following~\cite[Chapter~VIII]{hormanderi} and~\cite{Sjostrand:1982}. For a distribution
\[
u = (u_1, \dots, u_N)^{\mathrm{T}} \in \mathscr{D}'(Y; \Omega_Y^{1/2} \otimes \mathbb{C}^N),
\]
the analytic wavefront set is defined componentwise as
\begin{equation}\label{eq:wave_front_vector}
    \WF(u) := \bigcup_{j=1}^N \WF(u_j) \subset T^*Y \setminus \{0\}.
\end{equation}
Each \(\WF(u_j)\) is a closed conic subset of \(T^*Y \setminus \{0\}\). Given a real-analytic coordinate chart \(\kappa: Y_\kappa \to \mathbb{R}^n\), the wavefront set transforms under pullback as
\[
\WF(u_j) \cap T^*(Y_\kappa) = \kappa^* \left( \WF(u_j \circ \kappa^{-1}) \right),
\]
where the right-hand side denotes the Euclidean analytic wavefront set. The invariance of \(\WF(u_j)\) under real-analytic diffeomorphisms (see~\cite[Theorem~8.5.1]{hormanderi}) ensures that this definition is coordinate-independent.

Let \(\tilde{X} \subset \mathbb{R}^n\) be open. For any \(v \in \mathscr{D}'(\tilde{X})\), the \emph{analytic singular support} \(\operatorname{sing\,supp}_A v\) is the smallest closed subset of \(\tilde{X}\) outside of which \(v\) is real-analytic.

We define the analytic wavefront set via the Fourier--Bros--Iagolnitzer (FBI) transform, following \cite{hormanderi,Sjostrand:1982}. Let $U\subseteq \R^m$ be a bounded open set, and identify
\[
T^*U \cong U \times \mathbb{R}^m, \qquad u = (z, \zeta), \quad z \in U,\; \zeta \in \mathbb{R}^m.
\]
Define the Gaussian
\[
c_m = 2^{-m/2} \pi^{-3m/4}, \qquad M(z) = c_m e^{-\frac{1}{2} |z|^2}, \quad z \in \mathbb{R}^m.
\]
For \(\lambda > 0\), define the Gaussian wave packet centered at \(u = (z, \zeta) \in T^*U\) by
\[
M_u^\lambda(w) := \lambda^{3m/4} e^{i\lambda\, w \cdot \zeta} \, M\big( \lambda^{1/2}(w - z) \big), \qquad w \in \mathbb{R}^m.
\]
Let \(f \in \mathscr{E}'(\widehat{U})\), where \(\widehat{U} \Subset U\). The FBI transform of \(f \in \mathscr{D}'(\mathbb{R}^m)\) with respect to the Gaussian \(M\) is defined by
\[
(L_M^\lambda f)(u) := \int_{\mathbb{R}^m} f(w)\, \overline{M_u^\lambda(w)}\, dw.
\]

According to~\cite[Theorem~9.6.3]{hormanderi}, for \((z_0, \zeta_0) \in T^*\mathbb{R}^m \setminus \{0\}\), we have
\begin{equation}\label{eq:FBI_wavefront}
    (z_0, \zeta_0) \notin \WF(f) \quad \text{if and only if} \quad \exists\, \varepsilon > 0 \text{ such that } (L_M^\lambda f)(v) = \mathcal{O}(e^{-\varepsilon \lambda})
\end{equation}
uniformly for \(v\) in a conic neighborhood of \((z_0, \zeta_0)\) as \(\lambda \to \infty\). The analyatic wavefront set $\operatorname{WF}_a(f)$ is a closed conic subset of $T^*\R^n\setminus\{0\}$, and its projection onto the base coincides with $\operatorname{sing supp}_Af$.  An equivalent characterization can be found in \cite[Section~8.4]{hormanderi}, \cite[Appendix~A.8.2]{Stefanov:Uhlmann:MicrolocalBook}.

The following theorem provides an equivalent characterization of the analytic wavefront set using general analytic phase functions, generalizing the FBI transform defination above.
\begin{theorem}[{\cite[Section~6]{Sjostrand:1982}}]\label{thm:FBI}
Let \((x_0, \xi_0) \in T^*\mathbb{R}^n \setminus \{0\}\) be fixed. Let \(\phi(x, \xi, y)\) be a \emph{phase function} defined in a neighbourhood $U \subset \mathbb{C}^{3n}$ of $(x_0, \xi_0, x_0)$, holomorphic in all variables and satisfying 
\[\varphi(x,\xi,x)=0, \qquad \partial_y\varphi(x,\xi,x)=-\xi,\qquad \operatorname{Im}\varphi(x,\xi,y)\geq \frac{|x-y|^2}{C}\]
for some $C>0$, whenever $(x,\xi,y)$ real in $U$. Such a $\varphi$ is called an \emph{analytic phase function}. 
Let \(a(x, \xi, y; \lambda)\) be a classical analytic symbol that is elliptic in \(U\) (see~\cite[Theorem~1.5]{Sjostrand:1982}). Then, for any $u\in \mathcal D'(\R^n)$, the following are equivalent:
\begin{enumerate}
  \item \((x_0, \xi_0) \notin \WF(u)\).
  \item There exists \(\chi \in C_0^\infty(\mathbb{R}^n)\) with \(\chi(x_0) = 1\) and a constant \(C > 0\) such that
  \[
  \int_{\mathbb{R}^n}
    e^{i\lambda \phi(x, \xi, y)}\,
    a(x, \xi, y; \lambda)\,
    \chi(y)\, u(y)\, dy
    = \mathcal{O}(e^{-\lambda/C}) \quad \text{as } \lambda \to \infty,
  \]
  uniformly for \((x, \xi)\) in a real conic neighborhood of \((x_0, \xi_0)\).
\end{enumerate}
\end{theorem}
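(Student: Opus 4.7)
The plan is to establish the equivalence $(1)\iff(2)$ by using the Bros--Iagolnitzer (BI) characterization recalled just before the theorem statement as a bridge. The BI characterization corresponds to the specific choice
\[
\phi_{\mathrm{BI}}(x,\xi,y)=(x-y)\!\cdot\!\xi+\tfrac{i}{2}|x-y|^{2},\qquad a\equiv 1,
\]
which trivially satisfies the three hypotheses imposed on $(\phi,a)$ in the theorem. Hence it suffices to prove that, for a fixed $u\in\mathscr{D}'(\mathbb{R}^{n})$ and a fixed $(x_{0},\xi_{0})$, the exponential-decay estimate in (2) holds for a particular admissible pair $(\phi_{\mathrm{BI}},1)$ if and only if it holds for every admissible pair $(\phi,a)$.

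The core step is the reduction $(\phi,a)\leftrightarrow(\phi_{\mathrm{BI}},1)$, which I would carry out by an analytic stationary-phase/steepest-descent argument in the $y$ variable. The phases agree to first order at $y=x$: both satisfy $\phi(x,\xi,x)=0$ and $\partial_{y}\phi(x,\xi,x)=-\xi$, and both have $\operatorname{Im}\phi$ bounded below by $|x-y|^{2}/C$ near the diagonal. Consequently, the analytic Morse lemma (or Kuranishi trick) in the complex $y$-domain produces an analytic change of coordinates $y\mapsto y'(x,\xi,y)$, defined in a complex neighborhood of $y=x_{0}$, that converts $\phi$ into $\phi_{\mathrm{BI}}$ up to an exact quadratic remainder. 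Pushing forward the measure and the elliptic symbol $a$ (which remains elliptic under the analytic diffeomorphism) yields an identity of the form
\[
\int e^{i\lambda\phi(x,\xi,y)}a(x,\xi,y;\lambda)\chi(y)u(y)\,dy
=\int e^{i\lambda\phi_{\mathrm{BI}}(x,\xi,y')}\tilde a(x,\xi,y';\lambda)\tilde\chi(y')u(y')\,dy'+O(e^{-\lambda/C}),
\]
with $\tilde a$ again a classical analytic symbol, elliptic at $y'=x_{0}$. Applying the same manipulation in the reverse direction gives the converse.

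The decisive technical obstacle, and the place where I expect to spend the bulk of the effort, is justifying the above contour/variable deformation when $u$ is merely a distribution rather than an analytic function, so that literally deforming the $y$-contour into $\mathbb{C}^{n}$ is not permitted. The standard remedy, following Sj\"ostrand, is to implement the reparametrization at the level of \emph{realizations of formal analytic symbols}: one constructs $\tilde a$ as a formal series whose optimal truncation defines an exact smooth symbol with an error of size $O(e^{-\lambda/C})$. Two inputs make this work: the ellipticity of $a$ (which provides a unique transport-equation solution for the new symbol at each order) and the quadratic lower bound $\operatorname{Im}\phi(x,\xi,y)\ge |x-y|^{2}/C$, which combined with the compact support of $\chi$ absorbs both the truncation error and the boundary contributions from $\partial(\operatorname{supp}\chi)$ into the exponentially small remainder. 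Once this machinery is in place, the equivalence with the BI characterization follows directly, and hence $(x_{0},\xi_{0})\notin\WF(u)$ is equivalent to (2) for the arbitrary admissible $(\phi,a)$.
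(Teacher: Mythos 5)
The paper offers no proof of this statement: it is quoted verbatim from Sj\"ostrand's work (Section~6 of the cited reference) and used as a black box, so there is no internal argument to compare yours against. Judged on its own terms, your overall strategy --- treat the Bros--Iagolnitzer phase $\phi_{\mathrm{BI}}=(x-y)\cdot\xi+\tfrac{i}{2}|x-y|^2$ with $a\equiv 1$ as one admissible pair, and prove that the exponential-decay estimate is independent of the admissible pair $(\phi,a)$, handling distributional $u$ via realizations of formal analytic symbols and the bound $\operatorname{Im}\phi\ge|x-y|^2/C$ --- is exactly the right frame, and it is the frame of Sj\"ostrand's proof.

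The gap is in the mechanism you propose for the reduction. You invoke the analytic Morse lemma (or Kuranishi trick) to produce a fiberwise change of variables $y\mapsto y'(x,\xi,y)$ converting $\phi$ into $\phi_{\mathrm{BI}}$. This step fails for two reasons. First, $y=x$ is \emph{not} a critical point of $y\mapsto\phi(x,\xi,y)$: by hypothesis $\partial_y\phi(x,\xi,x)=-\xi\neq 0$, so the Morse lemma is simply inapplicable there; the Kuranishi trick would instead be applied to a frequency variable, but here $\xi$ is a parameter, not an integration variable. Second, and more fundamentally, two FBI phases satisfying the stated conditions are in general \emph{not} related by any $(x,\xi)$-dependent diffeomorphism in $y$: writing $\phi=\phi_{\mathrm{BI}}+r$ with $r=O(|x-y|^2)$, the factor $e^{i\lambda r}$ is not a symbol (it is unbounded in $\lambda$ off the shrinking set $|x-y|\lesssim\lambda^{-1/2}$), so the discrepancy cannot be absorbed into $\tilde a$ after a substitution. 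The actual proof replaces your pointwise reduction by an operator-level one: construct an approximate left inverse of the transform $T_1$ associated with $(\phi_1,a_1)$, exact modulo $\mathcal{O}(e^{-\lambda/C})$ (this is where ellipticity of $a_1$ and the theory of good contours enter), compose it with $T_2$, and apply the Melin--Sj\"ostrand complex stationary phase theorem to the composed kernel; the hypotheses $\phi(x,\xi,x)=0$, $\partial_y\phi(x,\xi,x)=-\xi$, $\operatorname{Im}\phi\gtrsim|x-y|^2$ guarantee a nondegenerate critical point with positive-definite imaginary Hessian and strictly positive critical value off the diagonal, which yields the exponential decay. Without this composition argument (or an equivalent one), the equivalence you claim between the two admissible pairs is not established, and that equivalence is the entire content of the theorem.
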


\subsection{Matrix-valued Fourier integral operators}

Let \(\varphi \in C^{\infty}_c(X; \Omega_X^{1/2} \otimes \mathbb{C}^N)\) and \(u \in C^{\infty}_c(Y; \Omega_Y^{1/2} \otimes \mathbb{C}^N)\). The \emph{external tensor product} is defined by
\begin{equation}
    (\varphi \boxtimes u)(x, y) := \varphi(x) \otimes u(y) = \left( \varphi_i(x) u_j(y) \right)_{1 \le i,j \le N} |dx|^{1/2} |dy|^{1/2}.
\end{equation}

Let
\[
K \in \mathscr{D}'(X \times Y; \Omega_{X \times Y}^{1/2} \otimes \mathrm{Hom}(\mathbb{C}^N, \mathbb{C}^N)), \quad K = (K_{ij})_{1 \le i,j \le N}
\]
be a distributional kernel valued in \(N \times N\) complex matrices with half-density coefficients. It defines a continuous linear operator via the bilinear pairing:
\begin{align}\label{eq:K:operator}
\begin{split}
    \mathcal{K} : C^{\infty}_c(Y; \Omega_Y^{1/2} \otimes \mathbb{C}^N) &\to \mathscr{D}'(X; \Omega_X^{1/2} \otimes \mathbb{C}^N), \\
    \langle \mathcal{K}u, \varphi \rangle_X &:= \langle K, \varphi \boxtimes u \rangle_{X \times Y},
\end{split}
\end{align}
where \(\varphi \in C^{\infty}_c(X; \Omega_X^{1/2} \otimes \mathbb{C}^N)\), and the pairing \(\langle \cdot, \cdot \rangle_X\) denotes the canonical duality between distributions and test functions on \(X\) (see~\eqref{eq:duality}).

Let \((x_1, \dots, x_{n_X})\) be local coordinates on \(X\), and \((x_1, \dots, x_{n_X}, \xi_1, \dots, \xi_{n_X})\) the induced coordinates on \(T^*X\). The \emph{canonical 1-form} on \(T^*X\) is given by \(\alpha = \sum_{j=1}^{n_X} \xi_j\, dx_j\). By~\cite[Proposition~3.7.1]{Duistermaat:FIO:book}, a closed \(n_X\)-dimensional submanifold \(L \subset T^*X \setminus \{0\}\) is a \emph{conic Lagrangian manifold} if and only if \(\alpha|_L = 0\).

Let \(\Lambda \subset T^*(X \times Y) \setminus \{0\}\) be a closed conic Lagrangian submanifold. For \(m \in \mathbb{R}\), define
\[
I^m(X \times Y, \Lambda; \Omega_{X \times Y}^{1/2} \otimes \mathrm{Hom}(\mathbb{C}^N, \mathbb{C}^N))
\]
to be the space of matrix-valued Lagrangian distributions of order \(m\) associated with \(\Lambda\) (see~\cite[Definition~25.1.1, Section~25.2]{hormanderiv}).

\begin{remark}\label{rm:identification}
There is a canonical identification
\[
T^*(X \times Y) \setminus \{0\} \cong (T^*X \setminus \{0\}) \times (T^*Y \setminus \{0\}) \cup (T^*X \setminus \{0\}) \times \{0\} \cup \{0\} \times (T^*Y \setminus \{0\}),
\]
given by the splitting \((x, y; \xi, \eta) \mapsto (x; \xi, y; \eta)\).
\end{remark}

The \emph{twisted canonical relation} associated with \(\Lambda\) is
\[
\Lambda' := \left\{\, (x, \xi, y, -\eta) \mid (x, \xi, y, \eta) \in \Lambda \,\right\},
\]
which defines a canonical relation from \(T^*Y \setminus \{0\}\) to \(T^*X \setminus \{0\}\).

\begin{definition}[Matrix-valued Fourier integral operator, {\cite[Definition~25.2.1]{hormanderiv}}]\label{def:FIO-matrix}
A continuous operator
\[
\mathcal{K} : C^{\infty}_c(Y; \Omega_Y^{1/2} \otimes \mathbb{C}^N) \to \mathscr{D}'(X; \Omega_X^{1/2} \otimes \mathbb{C}^N)
\]
is called a \emph{Fourier integral operator of order \(m\) associated with \(\Lambda'\)} if its kernel \(K\) lies in
\[
I^m(X \times Y, \Lambda'; \Omega_{X \times Y}^{1/2} \otimes \mathrm{Hom}(\mathbb{C}^N, \mathbb{C}^N)).
\]
We write
\[
\mathcal{K} \in I^m(X, Y, \Lambda'; \Omega_{X \times Y}^{1/2} \otimes \mathrm{Hom}(\mathbb{C}^N, \mathbb{C}^N)).
\]
\end{definition}
\begin{lemma}[Vector-valued version of {\cite[Theorem~8.5.5]{hormanderi}}]\label{thm:8.5.5-vector}
Let \(u \in \mathscr{E}'(Y; \Omega_Y^{1/2} \otimes \mathbb{C}^N)\) and let
\[
K \in \mathscr{D}'(X \times Y; \Lambda'; \Omega_{X \times Y}^{1/2} \otimes \mathrm{Hom}(\mathbb{C}^N, \mathbb{C}^N))
\]
be a matrix-valued distributional kernel associated with a conic Lagrangian submanifold \(\Lambda' \subset T^*(X \times Y) \setminus \{0\}\). Suppose that
\[
\WF(u) \cap \WF'(K)_Y = \emptyset,
\]
where
\begin{align*}
    \WF'(K) &:= \left\{\, (x, \xi, y, -\eta) \mid (x, y, \xi, \eta) \in \WF(K) \,\right\}, \\
    \WF'(K)_Y &:= \left\{\, (y, \eta) \mid \exists\, x \in X \text{ such that } (x, y, 0, -\eta) \in \WF(K) \,\right\}.
\end{align*}
Then the operator \(\mathcal{K}\) defined in~\eqref{eq:K:operator} can be extended uniquely to $\mathcal E'(Y; \Omega_Y^{1/2} \otimes \mathbb{C}^N)$, and the extended operator satisfies
\[
\WF(\mathcal{K}u) \subset \WF(K)_X \cup \left( \WF'(K) \circ \WF(u) \right),
\]
where
\[
\WF(K)_X := \left\{\, (x, \xi) \mid \exists\, y \in Y \text{ such that } (x, y, \xi, 0) \in \WF(K) \,\right\}.
\]
\end{lemma}

\begin{proof}
By the definition of the wavefront set for vector-valued distributions (see~\eqref{eq:wave_front_vector}), we have
\begin{align*}
    \WF(\mathcal{K}u)
    &= \bigcup_{i=1}^N \WF((\mathcal{K}u)_i)
    = \bigcup_{i=1}^N \WF\left( \sum_{j=1}^N \langle K_{ij}, u_j \rangle_Y \right)
    \subset \bigcup_{i,j=1}^N \WF\left( \langle K_{ij}, u_j \rangle_Y \right).
\end{align*}
For each fixed pair \(i, j \in \{1, \dots, N\}\), we apply the scalar-valued result~\cite[Theorem~8.5.5]{hormanderi} to obtain
\[
\WF\left( \langle K_{ij}, u_j \rangle_Y \right)
\subset \WF(K_{ij})_X \cup \left( \WF'(K_{ij}) \circ \WF(u_j) \right),
\]
where
\[
\WF'(K_{ij}) := \left\{\, (x, \xi; y, \eta) \mid (x, y, \xi, -\eta) \in \WF(K_{ij}) \,\right\} \subset T^*X \times T^*Y,
\]
and
\[
\WF'(K_{ij}) \circ \WF(u_j) := \left\{\, (x, \xi) \mid \exists\, (y, \eta) \in \WF(u_j) \text{ such that } (x, \xi; y, \eta) \in \WF'(K_{ij}) \,\right\}.
\]
Taking the union over all \(i, j\), we conclude
\[
\WF(\mathcal{K}u) \subset \bigcup_{i,j=1}^N \left( \WF(K_{ij})_X \cup \left( \WF'(K_{ij}) \circ \WF(u_j) \right) \right),
\]
which is contained in
\[
\WF(K)_X \cup \left( \WF'(K) \circ \WF(u) \right),
\]
as claimed.
\end{proof}

\begin{remark}
In addition to the assumptions of Lemma~\ref{thm:8.5.5-vector}, suppose that the Lagrangian \(\Lambda\) satisfies
\[
\Lambda \subset \left(T^*X \setminus \{0\}\right) \times \left(T^*Y \setminus \{0\}\right).
\]
Then it follows that
\[
\WF(K)_X = \WF'(K)_Y = \emptyset,
\]
and consequently, the analytic wavefront set of \(\mathcal{K}u\) satisfies the sharper inclusion
\[
\WF(\mathcal{K}u) \subset \WF'(K) \circ \WF(u).
\]
\end{remark}
\subsection{Geometric preliminaries}

In this subsection, we recall some geometric notions relevant to our analysis. For further details, we refer to~\cite[Chapter~3]{GIP2D}. Let \((M, g)\) be a compact, connected, oriented Riemannian manifold of dimension \(n \geq 2\) with smooth boundary.

The \emph{unit tangent bundle} is defined by
\[
S M := \left\{\, (x, v) \in T M \mid \langle v, v \rangle_{g(x)} = 1 \,\right\}.
\]
The boundary of \(S M\) is given by
\[
\partial S M := \left\{\, (x, v) \in S M \mid x \in \partial M \,\right\}.
\]
We define the \emph{influx} and \emph{outflux} boundaries of \(S M\) as
\begin{equation}\label{eq:influx_out}
    \partial_{\pm} S M := \left\{\, (x, v) \in \partial S M \mid \pm \langle v, \nu(x) \rangle_{g(x)} \geq 0\, \right\},
\end{equation}
where \(\nu(x)\) denotes the inward-pointing unit normal to \(\partial M\) at \(x\). The \emph{glancing region} is defined as
\[
\partial_0 S M := \partial_{+} S M \cap \partial_{-} S M = S(\partial M).
\]

\begin{definition}
The manifold \((M, g)\) is said to be \emph{non-trapping} if for every \(z \in S M\), the geodesic \(x_z\) issued from \(z\) satisfies \(\tau_+(z) < \infty\), where \(\tau_+\) denotes the forward exit time.
\end{definition}

\begin{definition}\label{def:strict_convex}
The boundary \(\partial M\) of a Riemannian manifold \((M, g)\) is said to be \emph{strictly convex at a point} \(x \in \partial M\) if the second fundamental form \(\Pi_x\) is positive definite. The boundary is said to be \emph{strictly convex} if this condition holds at every point \(x \in \partial M\). The second fundamental form is the bilinear form on \(T_x \partial M\) defined by
\[
\Pi_x(v, w) := -\left( \nabla_v \nu, w \right)_g, \qquad v, w \in T_x \partial M,
\]
where \(\nu\) is the inward unit normal and \(\nabla\) is the Levi-Civita connection.
\end{definition}

According to~\cite[Lemma~3.1.12]{GIP2D}, let \((M, g)\) be a compact Riemannian manifold with strictly convex boundary \(\partial M\), and let \((M_1, g)\) be a closed extension of \(M\). Then for every \(z \in \partial_{+} S M \setminus \partial_0 S M\), the corresponding geodesic \(x_z(t)\) remains entirely in the interior \(\operatorname{int}(M)\) for all \(t \in (-\tau_-(z), \tau_+(z))\), satisfying condition~\hyperlink{admi:i}{(i)} of Definition~\ref{def:admi_curve}.

Moreover, any geodesic that is tangent to \(\partial M\) at some point remains outside \(M\) for sufficiently small positive and negative times.
\section{Proof of the Main Result}

We recall the setting introduced in Section~\ref{sec:Intro}. Let $\mathcal{G}$ and $X$ be oriented manifolds without boundary, with $\dim(\mathcal{G}) = m$ and $\dim(X) = n$, and let $Z \subset \mathcal{G} \times X$ be an oriented embedded submanifold. Suppose that the natural projections
\[
\pi_\mathcal{G} \colon Z \to \mathcal{G}, \qquad \pi_X \colon Z \to X
\]
are submersions, and that
\[
m + n > \dim(Z) > m \geq n.
\]
We write $\dim(Z) = m + n'$ for some $n' \in \{1, \dotsc, n-1\}$, and define $n'' := n - n'$. Let $(N^* Z \setminus 0)'$ denote the twisted conormal bundle of $Z$ (see~\eqref{eq:twisted_conormal} for the definition), and define the natural projections
\[
\pi_L \colon (N^* Z \setminus 0)' \to T^* \mathcal{G}, \qquad
\pi_R \colon (N^* Z \setminus 0)' \to T^* X.
\]
Suppose the Bolker condition holds at a point $(z_0, \zeta_0, x_0, \eta_0) \in (N^* Z \setminus 0)'$, that is,
\[
\pi_L^{-1}(z_0, \zeta_0) = \{(z_0, \zeta_0, x_0, \eta_0)\}, \quad \text{and} \quad d\pi_L|_{(z_0, \zeta_0, x_0, \eta_0)} \text{ is injective}.
\]

We now work in local coordinates near $(z_0, \zeta_0, x_0, \eta_0)$, identifying neighborhoods with open subsets of $\mathbb{R}^m \times \mathbb{R}^m \times \mathbb{R}^n \times \mathbb{R}^n$. Let
\[
w = (w'', w') \in \mathbb{R}^{n''} \times \mathbb{R}^{m - n''}, \qquad y \in \mathbb{R}^n.
\]
By~\cite[Lemma 5.7]{mazzucchelli2023general}, there exists a coordinate representation for the analytic double fibration \(Z\), which is an analytic submanifold of \(V \times U\) of dimension \(m + n'\). More precisely, there exist an open neighborhood 
\(\Omega \subset V \times U\) of \((z_0, x_0)\), an open set
\(\widetilde{\Omega} \subset \mathbb{R}^{m} \times \mathbb{R}^{n'+n''}\), and a real-analytic diffeomorphism
\[
(w'', w', y) \mapsto \big(b(w'', w', y),\, y\big) \in V \times U, \qquad 
\widetilde{\Omega} \to \Omega,
\]
such that
\begin{equation}\label{eq:Z-analytic}
    Z \cap \Omega 
=
\left\{\,\big(b(0, w', y), y\big) \;\middle|\;
w' \in \mathcal{U},\;
y \in U\, \right\},
\end{equation}
where \(\mathcal{U} \subset \mathbb{R}^{m - n''}\) is a neighborhood of the origin. Here, $b:\tilde \Omega\to V$ is a real-analytic map.

For an open set \(\Omega \subset \mathbb{R}^k\), we denote by \(\Omega_{\mathbb{C}} \subset \mathbb{C}^k\) an open neighborhood such that \(\Omega_{\mathbb{C}} \cap \mathbb{R}^k = \Omega\). The coordinate system~\eqref{eq:Z-analytic} (possibly after shrinking \(\Omega\) and \(\widetilde{\Omega}\)) extends holomorphically to a coordinate system \(\widetilde{\Omega}_{\mathbb{C}} \to \Omega_{\mathbb{C}}\), such that \(Z_{\mathbb{C}} \cap \Omega_{\mathbb{C}}\) is again given by the condition \(w'' = 0\), as in~\eqref{eq:Z-analytic}.

From now on, we denote \(b(w', y) := b(0, w', y)\) for notational simplicity. In these coordinates, we define the associated \emph{phase function} \[
\Psi \colon \mathcal{U} \times U \times \widetilde{V} \to \mathbb{C}, \quad \text{where } \mathcal{U} \subset \mathbb{R}^{m - n''},\; U \subset \mathbb{R}^n,\; \widetilde{V} \subset \mathbb{R}^{2m},
\]
by
\begin{equation}\label{def:Psi}
    \Psi(w', y, z, \zeta) := - b(w', y) \cdot \zeta + \frac{i}{2} \lvert b(w', y) - z \rvert^2.
\end{equation}

From~\cite[Lemma 5.5]{mazzucchelli2023general}, under the assumption that the Bolker condition holds at \((z_0, \zeta_0, x_0, \eta_0) \in (N^* Z \setminus 0)'\), there exists a neighborhood \(\tilde{C}\) of \((z_0, \zeta_0, x_0, \eta_0)\) in \((N^* Z \setminus 0)'\) such that the map
\begin{equation}\label{eq:lm:5.5}
    \mu := \pi_R \circ \pi_L^{-1} \colon \pi_L(\tilde{C}) \to \pi_R(\tilde{C})
\end{equation}
is an analytic surjective submersion.

By the implicit function theorem in the analytic category, there exists an analytic map
\begin{equation}\label{eq:chi+}
    \mu^{+} \colon \pi_R(\tilde{C}) \to \pi_L(\tilde{C}) \quad \text{such that} \quad \mu \circ \mu^{+} = \mathrm{Id}.
\end{equation}
\begin{theorem}\label{tm:5.2}
Let \(Z \subset \mathcal{G} \times X\) be a double fibration, and let \(R_W\) be a real-analytic matrix-weighted double fibration ray transform as in Definition~\ref{def:double_fibration}. Assume that the Bolker condition holds at \((z_0, \zeta_0, x_0, \eta_0) \in (N^* Z \setminus 0)'\). Then there exists a neighborhood \(\hat{U} \Subset U\) containing \(x_0\) such that for all \(\bsf \in \mathscr{E}'(\hat{U}; \mathbb{C}^N)\),
\[
(z_0, \zeta_0) \notin \mathrm{WF}_{\mathrm{a}}(R_W \bsf) \quad \Longrightarrow \quad (x_0, \eta_0) \notin \mathrm{WF}_{\mathrm{a}}(\bsf),
\]
where \(\mathrm{WF}_{\mathrm{a}}\) denotes the analytic wavefront set.
\end{theorem}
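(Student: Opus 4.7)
The strategy is to follow the scalar proof of the analogous result in~\cite{mazzucchelli2023general}, using the Sjöstrand FBI characterization of the analytic wavefront set (Theorem~\ref{thm:FBI}), and to use the invertibility of $W$ to decouple the matrix-valued components at the end. Since $\mathrm{WF}_{\mathrm{a}}$ of a vector-valued distribution is the union of the componentwise wavefront sets~\eqref{eq:wave_front_vector}, the assumption $(z_0,\zeta_0)\notin \mathrm{WF}_{\mathrm{a}}(R_W\bsf)$ gives exponential decay of a Sjöstrand-type FBI transform applied to every component $(R_W\bsf)_i$ in a conic neighborhood of $(z_0,\zeta_0)$, uniformly in $\lambda$. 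The goal is to transfer this exponential decay to a componentwise FBI transform of $\bsf$ localized near $(x_0,\eta_0)$.

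First, I would localize. Choose $\hat U \Subset U$ a small neighborhood of $x_0$ inside the coordinate patch provided by the parametrization $(w',y)\mapsto (b(w',y),y)$ of $Z$ from~\eqref{eq:Z-analytic}, and assume $\bsf \in \mathscr{E}'(\hat U;\mathbb{C}^N)$. Fix real-analytic cutoffs $\chi_{\mathcal G}$ near $z_0$ and $\chi_X$ near $x_0$ with $\chi_X\equiv 1$ on $\supp\bsf$; using the representation~\eqref{eq:R_W_def} together with the parametrization of $Z$, write, after a change of variables, the FBI transform of $R_W\bsf$ at $(z,\zeta)$ in the form
\[
\int e^{i\lambda\,\phi_1(z,\zeta,z')}\,a_1(z,\zeta,z';\lambda)\,\chi_{\mathcal G}(z')\,R_W\bsf(z')\,dz'
= \iint e^{i\lambda\,\Phi(w',y,z,\zeta)}\,A(w',y,z,\zeta;\lambda)\,W(b(w',y),y)\,\bsf(y)\,dw'\,dy
\]
up to a term that is $\mathcal{O}(e^{-\lambda/C})$ for $(z,\zeta)$ in a conic neighborhood of $(z_0,\zeta_0)$. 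Here $\Phi$ is the composition of the Sjöstrand phase $\phi_1$ with the parametrization and the phase $\Psi$ from~\eqref{def:Psi}, and $A$ is a matrix-valued classical analytic symbol.

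Second, I would carry out the complex stationary phase argument in the $w'$ variable, exactly as in the scalar case. The Bolker condition, combined with the analyticity of $\mu$ and the existence of its real-analytic right inverse $\mu^{+}$ from~\eqref{eq:chi+}, guarantees a unique nondegenerate (complex) critical point $w'=w'_{*}(z,\zeta,y)$ in a complex neighborhood, depending real-analytically on its arguments. Deforming contours in $w'$ through a Kuranishi-type change of variables and applying Sjöstrand's complex stationary phase theorem reduces the double integral, modulo $\mathcal{O}(e^{-\lambda/C})$, to a Sjöstrand oscillatory integral
\[
\int e^{i\lambda\,\widetilde\Phi(x,\eta,y)}\,\widetilde A(x,\eta,y;\lambda)\,W\big(b(w'_{*}),y\big)\,\bsf(y)\,dy,
\]
where $(x,\eta)=\mu^{+}(z,\zeta)$, $\widetilde\Phi$ is an admissible analytic FBI phase at $(x_0,\eta_0)$ in the sense of Theorem~\ref{thm:FBI}, and $\widetilde A$ is an elliptic classical analytic symbol. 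This is the technical core of the argument and where I expect the main obstacle to lie: verifying that the reduced phase $\widetilde\Phi$ satisfies $\widetilde\Phi(x,\eta,x)=0$, $\partial_y\widetilde\Phi(x,\eta,x)=-\eta$, and $\operatorname{Im}\widetilde\Phi \gtrsim |x-y|^2$ is a matter of direct computation from the Bolker geometry already handled in the scalar case, but one must keep careful track of the matrix-valued amplitude throughout the contour deformation and ensure that ellipticity of $\widetilde A$ is preserved.

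Finally, since $W$ is real-analytic and $\mathrm{GL}(N,\mathbb{C})$-valued, the matrix $W(b(w'_{*}(x_0,\eta_0,x_0)),x_0)$ is invertible, hence left-multiplying by a real-analytic matrix symbol equal to $W^{-1}$ near the critical set gives an elliptic classical analytic matrix symbol. The exponential decay hypothesis on the left-hand side then translates, componentwise, into the bound
\[
\int e^{i\lambda\,\widetilde\Phi(x,\eta,y)}\,B(x,\eta,y;\lambda)\,\bsf_j(y)\,dy = \mathcal{O}(e^{-\lambda/C}),\qquad j=1,\dots,N,
\]
uniformly for $(x,\eta)$ in a real conic neighborhood of $(x_0,\eta_0)$, with $B$ scalar elliptic analytic. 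Applying Theorem~\ref{thm:FBI} to each component yields $(x_0,\eta_0)\notin\mathrm{WF}_{\mathrm{a}}(\bsf_j)$ for every $j$, and by~\eqref{eq:wave_front_vector} this gives $(x_0,\eta_0)\notin\mathrm{WF}_{\mathrm{a}}(\bsf)$, completing the proof.
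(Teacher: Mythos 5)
Your plan follows essentially the same route as the paper's proof: FBI/Gaussian-wave-packet characterization of $\mathrm{WF}_{\mathrm{a}}(R_W\bsf)$, Fubini together with the local parametrization $(w',y)\mapsto(b(w',y),y)$ of $Z$, complex stationary phase in $w'$ (the phase properties you flag as the technical core are exactly what the paper imports from \cite[Proposition~5.6]{mazzucchelli2023general} as Lemma~\ref{theorem:MST}), substitution $(z,\zeta)=\mu^{+}(x,\eta)$, and Sjöstrand's criterion at $(x_0,\eta_0)$. The only cosmetic difference is the final decoupling: you left-multiply by $W^{-1}$ to scalarize the elliptic amplitude, whereas the paper freezes the amplitude at $(x_0,\eta_0)$, concludes $(x_0,\eta_0)\notin\mathrm{WF}_{\mathrm{a}}(W_3(x_0,\eta_0,\cdot)\bsf)$, and then invokes invertibility of the analytic matrix $W_3(x_0,\eta_0,\cdot)$ — both rest on the same fact and are equally valid.
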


To prove this theorem, we rely on the following key lemma, which enables the application of Sjöstrand’s analytic microlocal analysis.

\begin{lemma}[{\cite[Proposition~5.6]{mazzucchelli2023general}}]\label{theorem:MST}
There exists a small complex neighborhood 
\[
\tilde{U} \times \tilde{\mathcal{V}} \subset \mathbb{C}^n \times \mathbb{C}^{2m}
\]
of \((x_0, z_0, \zeta_0)\) such that for any fixed \((y, z, \zeta) \in \tilde{U} \times \tilde{\mathcal{V}}\), the map
\[
w' \mapsto \Psi(w', y, z, \zeta),
\]
defined near a neighborhood \(\mathcal{U} \subset \mathbb{R}^{m - n''}\) of the origin (see~\eqref{eq:Z-analytic}, \eqref{def:Psi}), has a unique nondegenerate critical point \(w'_c(y, z, \zeta) \in \mathcal{U}_{\mathbb{C}}\) depending holomorphically on \((y, z, \zeta)\). Define
\[
\psi(y, z, \zeta) := \Psi(w'_c(y, z, \zeta), y, z, \zeta).
\]
Then \(\psi\) satisfies the following properties:
\begin{enumerate}
    \item[(i)] \(\psi\big(\pi(\mu(z, \zeta)), z, \zeta\big) + z \cdot \zeta = 0\), where \(\mu := \pi_R \circ \pi_L^{-1}\) (see~\eqref{eq:lm:5.5});
    \item[(ii)] \(\left(y, -d_y \psi(y, z, \zeta)\right)\big|_{y = \pi(\mu(z, \zeta))} = \mu(z, \zeta)\), in particular,
    \[
    \left(y, -d_y \psi(y, z, \zeta)\right)\big|_{y = \pi(\mu(z_0, \zeta_0))} = (x_0, \eta_0);
    \]
    \item[(iii)] \(\operatorname{Im} \psi(y, z, \zeta) \geq c \lvert y - \pi(\mu(z, \zeta)) \rvert^2\) for real-valued \((y, z, \zeta)\) and some constant \(c > 0\).
\end{enumerate}
\end{lemma}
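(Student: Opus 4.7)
The plan is to (a) locate a real critical point at the base, (b) extend holomorphically via the analytic implicit function theorem, and (c) verify (i)--(iii) by direct computation, with the Bolker condition supplying the transverse positivity needed for (iii). Fix real $(z,\zeta)$ near $(z_0,\zeta_0)$ and set $y_\ast := \pi(\mu(z,\zeta))$. By the Bolker condition and the defining equations of $N^*Z$ in the chart~\eqref{eq:Z-analytic}, there is a unique real $w'_\ast$ with $b(w'_\ast,y_\ast) = z$ and $(\partial_{w'}b(w'_\ast,y_\ast))^T\zeta = 0$. A direct computation from~\eqref{def:Psi} yields $\partial_{w'}\Psi(w'_\ast,y_\ast,z,\zeta) = 0$ and
\[
\partial^2_{w'}\Psi(w'_\ast,y_\ast,z,\zeta) = -\langle \partial^2_{w'} b,\, \zeta\rangle + i\,(\partial_{w'}b)^T(\partial_{w'}b).
\]
The imaginary part is the Gram matrix of $\partial_{w'}b$, which is positive definite because $\partial_{w'}b$ has full column rank $m-n''$ (inherited from the diffeomorphism $(w'',w',y)\mapsto(b,y)$). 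A complex symmetric matrix of the form $R+iS$ with $S>0$ is invertible, giving nondegeneracy.

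Since all data are real-analytic, $\Psi$ extends holomorphically in $(w',y,z,\zeta)$. The analytic implicit function theorem applied to $\partial_{w'}\Psi = 0$ at $(w'_\ast,y_\ast,z,\zeta)$ produces a unique holomorphic $w'_c(y,z,\zeta)$ on a small complex neighborhood $\widetilde U \times \widetilde{\mathcal V}$ of $(x_0,z_0,\zeta_0)$ with $w'_c(y_\ast,z,\zeta) = w'_\ast$. Define $\psi(y,z,\zeta) := \Psi(w'_c(y,z,\zeta),y,z,\zeta)$. Property (i) is immediate: at $y = y_\ast$ we have $b(w'_c(y_\ast,z,\zeta),y_\ast) = z$, hence $\psi(y_\ast,z,\zeta) + z\cdot\zeta = (-z\cdot\zeta) + z\cdot\zeta = 0$. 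Property (ii) follows from the envelope identity $d_y\psi = d_y\Psi|_{w'=w'_c}$: at $y = y_\ast$ with $b = z$ this reduces to $-(\partial_y b(w'_\ast,y_\ast))^T\zeta$, matching the fiber coordinate of $\mu(z,\zeta)$ under the paper's convention for the twisted canonical relation~\eqref{eq:twisted_conormal}.

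For (iii), with real $(y,z,\zeta)$ set $F(y) := \operatorname{Im}\psi(y,z,\zeta)$. Properties (i), (ii) force $F(y_\ast) = 0$ and $\nabla F(y_\ast) = 0$, so $F$ vanishes to second order. By the chain rule and the critical-point equation,
\[
\partial^2_y\psi\big|_{y=y_\ast} = \partial^2_y\Psi - \partial_y\partial_{w'}\Psi\cdot(\partial^2_{w'}\Psi)^{-1}\cdot\partial_{w'}\partial_y\Psi,
\]
all evaluated at $(w'_\ast,y_\ast,z,\zeta)$. The imaginary parts of the individual Hessian blocks are precisely the blocks of the Gram matrix $J^T J$ with $J := (\partial_{w'}b,\partial_y b)$. \emph{The main obstacle} is that this Gram form is only positive \emph{semi}definite, with an $n'$-dimensional nullspace coming from the vertical directions of $\pi_{\mathcal G}|_Z$, so a naive block-matrix argument does not give $\operatorname{Im}(\partial^2_y\psi) > 0$. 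The Bolker condition --- injectivity of $d\pi_L$ on $C$ at the base point --- is exactly the nondegeneracy input that rules out any degeneration of the complex Schur complement in a real direction $\alpha\in\mathbb{R}^n\setminus\{0\}$: such a null direction would produce a nonzero element of $\ker d\pi_L|_{(z_0,\zeta_0,x_0,\eta_0)}$, contradicting Bolker. Hence $\operatorname{Im}(\partial^2_y\psi)(y_\ast,z,\zeta) > 0$, yielding the quadratic lower bound $F(y) \geq c|y-y_\ast|^2 + O(|y-y_\ast|^3)$, which upgrades to (iii) on a real neighborhood of $y_\ast$ via the standard Hörmander--Sjöstrand calculus of phases with positive imaginary part~\cite{Sjostrand:1982, hormanderi}.
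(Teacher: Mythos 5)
A preliminary remark: the paper does not prove this lemma at all --- it is imported verbatim from \cite[Proposition~5.6]{mazzucchelli2023general} --- so there is no in-paper proof to compare against. Your architecture (real critical point at the base point, nondegeneracy from the Gram matrix, analytic implicit function theorem, then direct verification of (i)--(iii)) is the natural route and is surely close in spirit to the source. Parts (i), (ii) and the nondegeneracy computation are essentially fine, modulo two small points: uniqueness of the critical point in a \emph{fixed} complex neighborhood $\mathcal{U}_{\mathbb{C}}$ (the implicit function theorem only gives local uniqueness, so a word about shrinking neighborhoods is needed), and the sign in (ii), which you assert rather than verify: with the conventions stated in this paper one computes $d_y\psi\big|_{y=y_*}=-(\partial_y b)^T\zeta$, while the fiber component of $\mu(z,\zeta)$ read off from \eqref{eq:twisted_conormal} is $+(\partial_y b)^T\zeta$, so the FBI-versus-twist sign bookkeeping must be resolved explicitly rather than appealed to.

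The genuine gap is in (iii), which is the only place the Bolker condition is actually used and hence the heart of the lemma; in your write-up it rests on two unproved assertions. First, before one can speak of "ruling out null directions" you need $\operatorname{Im}\partial_y^2\psi \geq 0$ at $y_*$. This is not automatic from your Hessian decomposition: the imaginary part of the Schur complement of $R+iJ^TJ$ is \emph{not} the corresponding block operation applied to $J^TJ$, and blockwise positive semidefiniteness does not pass to Schur complements of complex symmetric matrices without an argument. One needs either the H\"ormander--Sj\"ostrand lemma on critical values of phases with nonnegative imaginary part, or the explicit identity $\alpha^T\partial_y^2\psi\,\alpha = v^T H v$ with $H$ the full $(w',y)$-Hessian and $v=\bigl(-H_{w'w'}^{-1}H_{w'y}\alpha,\ \alpha\bigr)$, exploited together with $Hv=(0,\partial_y^2\psi\,\alpha)$. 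Second, the single sentence claiming that a real null direction of $\operatorname{Im}\partial_y^2\psi$ "produces a nonzero element of $\ker d\pi_L$" is precisely the content of (iii) and requires real work: from $\alpha^T\operatorname{Im}(\partial_y^2\psi)\alpha=0$ one must extract a nonzero real vector $\xi=(\delta w',\delta y)$ satisfying both $\partial_{w'}b\,\delta w'+\partial_y b\,\delta y=0$ and the linearized conormal condition $\zeta\cdot\partial_{w'}\bigl(\partial_{(w',y)} b\cdot\xi\bigr)=0$ (this uses the real block $-\langle\partial^2 b,\zeta\rangle$ of the Hessian, not only the Gram part), and then lift $\xi$, by a suitable choice of the conormal-fiber variation, to a nonzero tangent vector of $N^*Z$ killed by $d\pi_L$. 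The implication is true, but as written it is asserted, not proved, so the lemma's key estimate is not established. (The final upgrade from the Hessian bound to $\operatorname{Im}\psi\geq c\lvert y-y_*\rvert^2$ uniformly in real $(y,z,\zeta)$ is comparatively harmless after shrinking neighborhoods, since all data are analytic and $y_*=\pi(\mu(z,\zeta))$ varies continuously.)
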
\begin{proof}[Proof of Theorem~\ref{tm:5.2}]
Assume \((z_0, \zeta_0) \notin \mathrm{WF}_{\mathrm{a}}(R_W \bsf)\). By~\eqref{eq:FBI_wavefront}, there exists \(\varepsilon > 0\) such that the FBI transform of \(R_W \bsf\) satisfies
\[
L_M^\lambda R_W \bsf(z, \zeta)
:=
C_{m} \lambda^{3m/4}
\int_{\mathbb{R}^m}
e^{-i\lambda w \cdot \zeta - \lambda \lvert w - z \rvert^2 / 2}
R_W \bsf(w) \, d\omega_\mathcal{G}(w)
=
\mathcal{O}(e^{-\varepsilon \lambda}) \, \mathcal{I}
\]
as \(\lambda \to \infty\), uniformly for \((z, \zeta)\) near \((z_0, \zeta_0)\), where \(\mathcal{I}\) denotes the \(\mathbb{C}^N\)-valued vector with each entry equal to 1. We consider \((z, \zeta)\) in a sufficiently small neighborhood \(\mathcal{V}\) of \((z_0, \zeta_0)\).

Since
\[
\left| e^{-i\lambda w \cdot \zeta - \lambda \lvert w - z \rvert^2 / 2} \right| \leq e^{-2\varepsilon \lambda}
\]
for \(z \in \pi(\mathcal{V})\) and \(w\) satisfying \(\lvert z - w \rvert \geq 2\sqrt{\varepsilon}\), we may assume that \(\operatorname{supp}(R_W \bsf)\) is contained in a small neighborhood of \(z_0\).

From~\eqref{eq:Z-analytic}, we have that $Z$ may be presented locally by the coordinates $\{(b(w', y), y)\}$. Using this and Fubini's theorem with the formula
\[
d\omega_{G_w}(y) \, d\omega_{\mathcal{G}}(w)
=
d\omega_Z(w, y)
=
d\omega_{H_y}(w) \, d\omega_X(y), \quad H_y := \pi_\mathcal{G} \circ \pi_X^{-1}(y),
\]
we deduce that
\begin{equation}\label{equation:fbi}
L_M^\lambda R_W \bsf(z, \zeta)
=
\int_{\mathbb{R}^n} K_\lambda(y, z, \zeta) \bsf(y) \, dy
=
\mathcal{O}(e^{-\varepsilon \lambda}) \, \mathcal{I},
\end{equation}
where (see~\cite[Proposition 5.4]{mazzucchelli2023general})
\begin{equation}\label{equation:kernel1}
K_\lambda(y, z, \zeta)
=
C_{n, m} \lambda^{3m/4}
\int_{ \mathcal U}
e^{i \Psi(w', y, z, \zeta)}
W_1(w', y) \, dw',
\end{equation}
with
\[
W_1(w', y) := W\big(b(w', y), y\big),
\]
and \(\Psi\) defined in~\eqref{def:Psi}.

Applying the analytic stationary phase method (steepest descent) from~\cite[Théorème~2.8]{Sjostrand:1982} (see also~\cite[Theorem~2.3.4]{Hitrik:Sjostrand:2018}) to~\eqref{equation:kernel1}, we obtain
\begin{equation}\label{equation:kernel2}
K_\lambda(y, z, \zeta)
=
C_{n, N} \lambda^{5m/4 - n''/2} e^{i \psi(y, z, \zeta)}
\left\{
W_2(y, z, \zeta) + \mathcal{O}(\lambda^{-1}) \operatorname{Id}
\right\}
+ \mathcal{O}(e^{-\varepsilon \lambda}) \operatorname{Id},
\end{equation}
uniformly for complex \((y, z, \zeta)\) near \((x_0, z_0, \zeta_0)\), where
\[
W_2(y, z, \zeta)
=
W_1\big(w'_c(y, z, \zeta), y\big)
=
W\left(
\left( b(w'_c(y, z, \zeta), y)\right), y
\right),
\]
and \(\psi\) is as defined in Lemma~\ref{theorem:MST}.

Substituting~\eqref{equation:kernel2} into~\eqref{equation:fbi}, we obtain
\[
\int_{\mathbb{R}^n}
e^{i \lambda \psi(y, z, \zeta)}
\left\{
W_2(y, z, \zeta) + \mathcal{O}(\lambda^{-1}) \operatorname{Id}
\right\}
\bsf(y) \, dy
=
\mathcal{O}(e^{-\varepsilon \lambda}) \, \mathcal{I}
\]
uniformly for complex \(( z, \zeta)\) near \((z_0, \zeta_0)\).

Multiplying by \(e^{i \lambda z \cdot \zeta}\), we obtain
\begin{equation}\label{equation:bargmann}
\int_{\mathbb{R}^n}
e^{i \lambda (\psi(y, z, \zeta) + z \cdot \zeta)}
\left\{
W_2(y, z, \zeta) + \mathcal{O}(\lambda^{-1}) \operatorname{Id}
\right\}
\bsf(y) \, dy
=
\mathcal{O}(e^{-\varepsilon \lambda}) \, \mathcal{I}
\end{equation}
uniformly for complex \(( z, \zeta)\) near \((z_0, \zeta_0)\).

Since \(\mu \circ \mu^+ = \operatorname{Id}\) (cf.~\eqref{eq:lm:5.5} and~\eqref{eq:chi+}), define the modified phase function
\[
\tilde{\psi}(y, x, \eta) := \left( \psi(y, z, \zeta) + z \cdot \zeta \right)\big|_{(z, \zeta) = \mu^+(x, \eta)}
\]
for complex \((x, \eta)\) near \((x_0, \eta_0)\).
Then Lemma~\ref{theorem:MST} implies that
\[
d_y \tilde{\psi}(y, x, \eta)\big|_{y = x}=\left.d_y \psi\left(y, \mu^{+}(x, \eta)\right)\right|_{y=x} = -\eta
\]
for complex $(y, x, \eta)$ near $\left(x_0, x_0, \eta_0\right)$.
Substituting \((z, \zeta) = \mu^+(x, \eta)\) into~\eqref{equation:bargmann}, we obtain
\begin{equation}\label{equation:bros}
\mathcal{B} \bsf(x, \eta)
:=
\int_{\mathbb{R}^n}
e^{i \lambda \tilde{\psi}(y, x, \eta)}
\left\{
W_3(x, \eta, y) + \mathcal{O}(\lambda^{-1}) \operatorname{Id}
\right\}
\bsf(y) \, dy
=
\mathcal{O}(e^{-\varepsilon \lambda}) \, \mathcal{I},
\end{equation}
uniformly for \((x, \eta)\) near $\left(x_0, \eta_0\right)$, where
\[
W_3(x, \eta, y)=
W_2(y, \mu^+(x, \eta))
=
W\left(b(w'_c(y, \mu^+(x, \eta)), y), y\right).
\]
Let \(\delta > 0\) be sufficiently small. For \(\lvert x - x_0 \rvert < \delta\), \(\lvert \eta - \eta_0 \rvert < \delta\), and \(\lambda > 1/\delta\), we have
\begin{align*}
&\left\{
W_3(x, \eta, y) + \mathcal{O}(\lambda^{-1}) \operatorname{Id}
\right\} \bsf(y)\\
&\qquad\qquad=
\left[
W_3(x_0, \eta_0, y)
+ \left(W_3(x, \eta, y) - W_3(x_0, \eta_0, y)\right)
+ \mathcal{O}(\lambda^{-1}) \operatorname{Id}
\right] \bsf(y) \\
&\qquad\qquad=
\left[
W_3(x_0, \eta_0, y)
+ \mathcal{O}(\delta) \operatorname{Id}
+ \mathcal{O}(\lambda^{-1}) \operatorname{Id}
\right] \bsf(y) \\
&\qquad\qquad=
\left(1 + \mathcal{O}(\delta) + \mathcal{O}(\lambda^{-1})\right)
W_3(x_0, \eta_0, y) \bsf(y) \\
&\qquad\qquad=
\left(1 + \mathcal{O}(\delta)\right)
W_3(x_0, \eta_0, y) \bsf(y),
\end{align*}
and thus~\eqref{equation:bros} becomes
\[
\mathcal{B} \bsf(x, \eta)
=
\int_{\mathbb{R}^n}
e^{i \lambda \tilde{\psi}(y, x, \eta)}
\left(1 + \mathcal{O}(\delta)\right)
W_3(x_0, \eta_0, y) \bsf(y) \, dy
=
\mathcal{O}(e^{-\varepsilon \lambda}) \, \mathcal{I}
\]
uniformly for \((x, \eta)\) near \((x_0, \eta_0)\).

In view of Sjöstrand's analytic microlocal analysis, the positivity of \(\operatorname{Im} \tilde{\psi}\) for real \((x, \eta)\) near \((x_0, \eta_0)\) (property~(iii) in Lemma~\ref{theorem:MST}) ensures that \(\mathcal{B}\) is a FBI transform that detects analytic wavefront sets (see Theorem~\ref{thm:FBI}). Hence, we deduce that
\[
(x_0, \eta_0) \notin \mathrm{WF}_{\mathrm{a}}\left(W_3(x_0, \eta_0, \cdot) \bsf(\cdot)\right).
\]
Since \(W_3(x_0, \eta_0, y)\) is an invertible matrix-valued function depending analytically on \(y\), we have
\[
(x_0, \eta_0) \notin \mathrm{WF}_{\mathrm{a}}(\bsf).
\]
This completes the proof of Theorem~\ref{tm:5.2}.
\end{proof}

\begin{proof}[Proof of Theorem~\ref{tm:main}]
Let $\bsf \in \mathscr{E}'(X; \mathbb{C}^N)$ be given. Choose a cutoff function $\psi \in C_c^\infty(X)$ such that $\psi = 1$ near $\hat{x}$ and $\operatorname{supp}(\psi)$ is contained in an analytic coordinate chart. We decompose $\bsf$ as
\begin{equation}\label{eq:r_2}
    \bsf = \psi \bsf + (1 - \psi)\bsf.
\end{equation}
By construction, $(1 - \psi)\bsf$ vanishes in a neighborhood of $\hat{x}$, and hence is real-analytic there. This implies
\begin{equation}\label{eq:r_1}
    (\hat{x}, \hat{\eta}) \notin \mathrm{WF}_a((1 - \psi)\bsf).
\end{equation}

According to \cite[Lemma~5.3]{mazzucchelli2023general}, the condition
\[
\left\{(z, \zeta, x, \eta) \in N^* Z \setminus 0 \mid \zeta = 0\right\} = \left\{(z, \zeta, x, \eta) \in N^* Z \setminus 0 \mid \eta = 0\right\} = \emptyset
\]
is equivalent to both projections \(\pi_{\mathcal{G}} \colon Z \to \mathcal{G}\) and \(\pi_X \colon Z \to X\) being submersions. Since \(Z\) is a double fibration, this condition is satisfied. In particular, it ensures that both \(\zeta \neq 0\) and \(\eta \neq 0\) on \(N^*Z \setminus 0\), thereby guaranteeing the assumptions of Lemma~\ref{thm:8.5.5-vector}.

Using the Bolker condition, \eqref{eq:r_1}, and Lemma~\ref{thm:8.5.5-vector}, we conclude that
\begin{equation}\label{eq:r_3}
    (\hat{z}, \hat{\zeta}) \notin \mathrm{WF}_a\big(R_W((1 - \psi)\bsf)\big).
\end{equation}

By assumption, we also have $(\hat{z}, \hat{\zeta}) \notin \mathrm{WF}_a\big(R_W(\bsf)\big).$
Combining this with the decomposition \eqref{eq:r_2} and \eqref{eq:r_3}, we obtain
\begin{equation}\label{eq:int_1}
    (\hat{z}, \hat{\zeta}) \notin \mathrm{WF}_a\big(R_W(\psi \bsf)\big).
\end{equation}

Following the argument in the proof of \cite[Theorem~5.1]{mazzucchelli2023general}, we localize near $\hat{x}$ and $\hat{z}$, and then identify $X$ and $\mathcal{G}$ with open subsets of $\mathbb{R}^n$ and $\mathbb{R}^m$, respectively. Applying Theorem~\ref{tm:5.2}, we obtain
\[
(\hat{x}, \hat{\eta}) \notin \mathrm{WF}_a(\psi \bsf).
\]
Since $\psi = 1$ in a neighborhood of $\hat{x}$, it follows that
\[
(\hat{x}, \hat{\eta}) \notin \mathrm{WF}_a(\bsf),
\]
which completes the proof.
\end{proof}
\section{Applications}
The following theorem is a straightforward generalization of the result for scalar distributions, given in \cite[Theorem~8.5.6']{hormanderiv} and \cite[Theorem 6.1]{mazzucchelli2023general}, to the case of vector-valued distributions.

\begin{theorem}\label{tm:6.1}
Let $X$ be a real-analytic manifold. Let $\Sigma$ be a $C^1$ hypersurface passing through $x_0 \in X$ with conormal $\nu_0$ at $x_0$. Suppose $\bsf=(f_1,\cdots, f_N) \in \mathcal{D}'(X, \mathbb{C}^N)$ satisfies $\bsf = 0$ on one side of $\Sigma$ near $x_0$. If for each $i\in \{1,\cdots, N\}$,
\begin{equation}\label{eq:int_3}
    (x_0, \nu_0) \notin \mathrm{WF}_a(f_i)
\quad \text{or} \quad 
(x_0, -\nu_0) \notin \mathrm{WF}_a(f_i),
\end{equation}
then $\bsf = 0$ in a neighborhood of $x_0$. In particular, if 
\begin{equation}\label{eq:vector-valued-or-statement}
   (x_0, \nu_0) \notin \mathrm{WF}_a(\bsf)
\quad \text{or} \quad 
(x_0, -\nu_0) \notin \mathrm{WF}_a(\bsf),
\end{equation}
holds, then $\bsf = 0$ in a neighborhood of $x_0$.
\end{theorem}

\begin{proof}
Since $\bsf = (f_1, \dots, f_N)$ and $\bsf = 0$ on one side of $\Sigma$ near $x_0$, it follows that each component $f_i = 0$ on one side of $\Sigma$ near $x_0$.
By applying \cite[Theorem~6.1]{mazzucchelli2023general} to each $f_i$, we conclude that $f_i = 0$ near $x_0$ for all $i$. Hence, $\bsf = 0$ near $x_0$. If \eqref{eq:vector-valued-or-statement} holds, then clearly \eqref{eq:int_3} holds for all $i \in \{1,\dots,N\}$, and the second statement follows.
\end{proof}

\begin{proof}[Proof of Theorem~\ref{tm:1.3}]
Since $R_W \bsf$ vanishes identically in a neighborhood of $z_0$, it follows that 
\[
(z_0, \zeta_0) \notin \mathrm{WF}_a(R_W \bsf).
\]
By the Bolker condition at $(z_0, \zeta_0, x_0, \nu_0)$ and Theorem~\ref{tm:main}, we conclude that
\[
(x_0, \nu_0) \notin \mathrm{WF}_a(\bsf).
\]
Since $\bsf$ vanishes on one side of a $C^1$ hypersurface $\Sigma$ near $x_0$ and $(x_0, \nu_0) \in N^* \Sigma$, Theorem~\ref{tm:6.1} implies that $\bsf = 0$ in a neighborhood of $x_0$.
\end{proof}
\begin{proposition}\label{prop:4.7}
Let $(M, g)$ be a smooth Riemannian manifold with boundary $\partial M$. Let $z_0 \in \partial_{+} S M$ satisfy $\tau_{+}(z_0) < \infty$. Assume that the geodesic $x_{z_0} \colon [0, \tau_{+}(z_0)]$ $ \to M$ satisfies the following conditions:
\begin{enumerate}
    \item $x_{z_0}$ is injective (i.e., it does not self-intersect);
    \item $x_{z_0}$ meets $\partial M$ transversely at $t = 0$ and $t = \tau_{+}(z_0)$;
    \item For all $t \in (0, \tau_{+}(z_0))$, we have $x_{z_0}(t) \in \mathrm{int}(M)$.
\end{enumerate}
Then for any sufficiently small neighborhood $\mathcal{G}$ of $z_0$ in $\partial_{+} S M$ and $W \in C^{\infty}(\mathcal{G} \times \mathrm{int}(M), \mathrm{GL}(N, \mathbb{C}))$, the matrix weighted geodesic ray transform defined by
\[
R_W \bsf(z) = \int_{0}^{\tau_{+}(z)} W(z, x_z(t)) \bsf(x_z(t)) \, \mathrm{d}t
\]
is a matrix weighted double fibration ray transform on $\mathcal{G}$. Moreover, if there are no conjugate points along $x_{z_0}$, then the Bolker condition is satisfied at every $(z_0, \zeta, x, \eta) \in C$ where $x = x_{z_0}(t)$ for some $t \in (0, \tau_{+}(z_0))$.
\end{proposition}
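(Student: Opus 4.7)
The plan is to establish the two claims separately: (a) that under hypotheses (1)--(3) the matrix weighted geodesic ray transform falls under Definition~\ref{def:double_fibration}, and (b) that the Bolker condition holds at the specified points once the no-conjugate-points hypothesis is added. For (a), my first step is to verify the admissibility conditions (i)--(v) of Definition~\ref{def:admi_curve} for the geodesic vector field on $\Xi = SM$ with a small initial-data neighborhood $\mathcal{G}\subset\partial_+SM$ of $z_0$. Hypotheses (1)--(3) together with $\tau_+(z_0)<\infty$ give (i)--(iv) for the reference geodesic $x_{z_0}$ directly; transverse exit at $t=0$ and $t=\tau_+(z_0)$ combined with continuous dependence of the geodesic flow on initial data propagate these properties to every $z$ in a sufficiently small $\mathcal{G}$. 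Condition (v) is vacuous, since $\dim\mathcal{G}=2n-2=\dim\Xi-1$. By \cite[Lemma~4.3]{mazzucchelli2023general}, the set $Z$ from \eqref{def:Z} is then an oriented embedded submanifold of $\mathcal{G}\times M$ of dimension $m+1$ with $n'=1$, $n''=n-1$, and both projections are submersions, so $Z$ defines a double fibration. Finally, under the unit-speed parametrization $t\mapsto x_z(t)$ the induced orientation form $\omega_{G_z}$ on $G_z=\pi_X(\pi_\mathcal{G}^{-1}(z))$ pulls back to $|dt|$, so formula \eqref{eq:R_W_def} reduces to the stated geodesic integral defining $R_W\bsf(z)$.

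For (b), I fix an interior parameter $t_0\in(0,\tau_+(z_0))$, set $x=x_{z_0}(t_0)$, and let $(z_0,\zeta,x,\eta)\in C$ be arbitrary with this base point. By construction $\eta$ annihilates $\dot x_{z_0}(t_0)$ and $\zeta$ is determined from $\eta$ by the codifferential of the incidence map $F(z,t)=x_z(t)$ at $(z_0,t_0)$. Single-valuedness of $\pi_L^{-1}(z_0,\zeta)$ follows from the injectivity of $x_{z_0}$ in hypothesis (1): any other preimage $(z_0,\zeta,x',\eta')$ would force $x'$ to lie on the trace of $x_{z_0}$, and the transverse pairing of $\zeta$ against variations in $T_{z_0}\mathcal{G}$ coming from distinct times would contradict hypothesis (1). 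For the injectivity of $d\pi_L$ at $(z_0,\zeta,x,\eta)$, I would identify its kernel with the space of infinitesimal $\mathcal{G}$-variations of the initial data at $z_0$ whose induced Jacobi field $J$ along $x_{z_0}$ has $J(0)\in T_{x_{z_0}(0)}\partial M$ and vanishes transversely to $\dot x_{z_0}(t_0)$ at $t_0$; the absence of conjugate points along $x_{z_0}$ then forces $J\equiv 0$, so the kernel is trivial.

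The main obstacle is the translation in (b) between the abstract differential $d\pi_L$ and this Jacobi-field picture along $x_{z_0}$. The cleanest implementation is to work in Fermi (semi-geodesic) coordinates along $x_{z_0}$, in which the linearized geodesic flow is manifestly governed by the transverse Jacobi equation and the conormals $\eta\perp\dot x_{z_0}(t_0)$ form a natural transversal to the geodesic. Once this identification is in place, the injectivity of $d\pi_L$ is equivalent to the non-vanishing of the corresponding Jacobi determinant, which is precisely what the absence of conjugate points along $x_{z_0}$ guarantees. This part of the argument is essentially geometric and parallels the classical microlocal analysis of the geodesic X-ray transform; the matrix weight plays no role here, since the Bolker condition depends only on the incidence relation $Z$ and not on $W$.
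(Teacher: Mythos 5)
Your overall route coincides with the paper's, which simply defers to \cite[Proposition~4.7, Lemmas~4.3 and~4.6]{mazzucchelli2023general} and observes that the matrix weight plays no role in the incidence geometry: your part (a) (admissibility \hyperlink{admi:i}{(i)}--\hyperlink{admi:iv}{(iv)} for nearby geodesics by continuous dependence, condition \hyperlink{admi:v}{(v)} vacuous since $\dim\mathcal G=\dim SM-1$, then Lemma~4.3 to get the double fibration) and your Jacobi-field argument for the injectivity of $d\pi_L$ are exactly the intended content.

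One step of your part (b) is not right as stated: the single-valuedness of $\pi_L^{-1}(z_0,\zeta)$ does \emph{not} follow from hypothesis (1) alone. Since every $(z_0,x')\in Z$ automatically has $x'=x_{z_0}(t')$ on the trace of the geodesic, injectivity of $x_{z_0}$ only tells you that distinct times give distinct base points; the substantive issue is whether two distinct times $t\neq t'$, with conormal covectors $\eta\perp\dot x_{z_0}(t)$ and $\eta'\perp\dot x_{z_0}(t')$, can produce the same $\zeta$ through the pairing $\zeta(\delta z)=\eta\bigl(J_{\delta z}(t)\bigr)$ with the Jacobi fields $J_{\delta z}$ generated by $T_{z_0}\mathcal G$. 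Ruling this out again uses the absence of conjugate points: the transverse Jacobi fields vanishing at $t$ evaluate at $t'$ to an $(n-1)$-dimensional space which, together with $\dot x_{z_0}(t')$, would force $\eta'=0$. So both halves of the Bolker condition, not only the injectivity of $d\pi_L$, rest on the no-conjugate-points hypothesis, and ``would contradict hypothesis (1)'' is not a valid deduction at that point. With this correction your argument is complete and agrees with \cite[Section~4.3]{mazzucchelli2023general}.
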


\begin{proof}
The proof follows exactly as in \cite[Proposition~4.7]{mazzucchelli2023general}, with the scalar weight replaced by a matrix-valued weight. The assumptions ensure that the geodesic flow defines a smooth double fibration structure, as shown in \cite[Lemma~4.3]{mazzucchelli2023general}. Furthermore, the absence of conjugate points guarantees that the Bolker condition is satisfied, as established in \cite[Lemma~4.6 and Section~4.3]{mazzucchelli2023general}.
\end{proof}
\begin{proof}[Proof of Theorem~\ref{tm:local:matrix_we}]
Since $\partial M$ is strictly convex at $x_0 \in \partial M$, it follows that one may choose a real-analytic extension $(\widetilde{M},\tilde{g})$ of $(M,g)$, with smooth boundary and $M \subset \mathrm{int}\widetilde{M}$, open $\widetilde{\mathcal{G}} \subset \partial \widetilde{M}$, and a real-analytic extension of $W$ to $C^\infty(\widetilde{\mathcal{G}} \times \widetilde{M},\mathrm{GL}(N, \mathbb{C}))$, such that $(\widetilde{M},\tilde{g})$, $\widetilde{\mathcal{G}}$, $\widetilde{W}$, and $\tilde{z}_0 \in \widetilde{\mathcal{G}}$ satisfy the assumptions of Proposition \ref{prop:4.7} and the Bolker condition holds, where $x_{\tilde{z}_0}$ is the unique geodesic in $\widetilde{M}$ such that $x_{\tilde{z}_0}$ meets $\partial M$ at $z_0=(x_0,v_0)$. Next, consider the zero extension $\tilde{\bsf}$ of $\bsf$ to $\widetilde{M}$, so that $\tilde{f} \in\mathscr{E}'(\widetilde{M},\mathbb{C}^N)$. Now, by the definition of the extension, it holds that $R_{\widetilde{W}}\tilde{\bsf}=0$ since $R_{W}{\bsf}=0$. As $\tilde{\bsf}$ vanishes on one side of the hypersurface $\partial M \subset \widetilde{M}$, Theorem~\ref{tm:1.3} implies that $\tilde{\bsf} = 0$ near $x_0$ in $\widetilde{M}$. Hence, $\bsf = 0$ near $x_0$ in $M$.
\end{proof}
\begin{proof}[Proof of Corollary~\ref{tm:1.4}]
    The statement follows from Theorem \ref{tm:local:matrix_we}, using a strictly convex foliation from \cite{BGL02}. Since the argument is virtually identical to the proof of \cite[Theorem 1.4]{mazzucchelli2023general}, we omit the details.
\end{proof}

Recall that $Z$ denotes the embedded submaniold of $\partial_+SM\times M$ defined in \eqref{def:Z}.
\begin{lemma}[Cauchy--Kovalevskaya theorem for a transport equation]\label{lm:weight_analytic}
Let \((M, g)\) be a compact real-analytic Riemannian manifold of dimension \(n \geq 2\) with real-analytic, strictly convex boundary \(\partial M\), and assume that the geodesic flow on \(M\) is nontrapping. Let $\mathcal{A}$ be a real-analytic matrix-valued function on $Z$. For $z \in \partial_+ SM$, define $W_{\mathcal{A}}$ as the solution to the first-order linear transport equation along the geodesic $x_z$
\begin{align}\label{eq:transport-equation}
\begin{split}
\frac{d}{dt} W_{\mathcal{A}}(z, x_z(t)) + \mathcal{A}(z, x_z(t)) W_{\mathcal{A}}(z, x_z(t)) &= 0, \\
W_{\mathcal{A}}(z, x_z(0)) &= \mathrm{Id}.
\end{split}
\end{align}
Then $W_{\mathcal{A}}$ is real-analytic on $Z$.
\end{lemma}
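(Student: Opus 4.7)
The plan is to reduce the lemma to the classical fact that solutions of real-analytic ODEs depend real-analytically on time and on analytic parameters, applied to the parallel-transport equation \eqref{eq:transport-equation} viewed as a linear ODE in $t$ with $z$ as the parameter. The output will then be transferred from the $(z, t)$-parametrization back to a statement on $Z$ itself.

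As a first step, I would place everything in a real-analytic open setting. Following the extension argument already used in the proof of Theorem~\ref{tm:local:matrix_we}, embed $(M, g)$ into a real-analytic Riemannian manifold $(\widetilde{M}, \tilde{g})$ with $M \Subset \widetilde{M}$, and extend $\mathcal{A}$ real-analytically to a neighborhood of $Z$ in the corresponding ambient setting. In any real-analytic atlas, the Christoffel symbols of $\tilde{g}$ are real-analytic, so the geodesic spray is a real-analytic vector field on $T\widetilde{M}$. By the classical theorem on analytic dependence of ODE solutions on initial conditions (e.g.\ Hartman's ODE text), the geodesic flow $\Phi_t(z) = (x_z(t), \dot{x}_z(t))$ is then jointly real-analytic in $(z, t)$ on an open set containing the compact region $\{(z, t) : z \in \partial_+ SM,\ 0 \leq t \leq \tau_+(z)\}$.

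The second step is the core of the argument: the composition $(z, t) \mapsto \mathcal{A}(z, x_z(t))$ is real-analytic in $(z, t)$, and the linear matrix ODE~\eqref{eq:transport-equation}, viewed as an ODE on $\mathrm{GL}(N, \mathbb{C})$ in the variable $t$ with $z$ as an analytic parameter, has a unique solution $\widetilde{W}(z, t)$ that is jointly real-analytic in $(z, t)$ by the same analytic parameter-dependence result. Equivalently, one may view the augmented system for $(\Phi_t(z), W(z, t))$ as the flow of a real-analytic vector field on $T\widetilde{M} \times \mathrm{GL}(N, \mathbb{C})$ and read off analyticity directly, or argue via Picard iteration together with uniform majorants preserving analyticity in the limit.

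The third and final step is to pass from the parameter space back to $Z$. The map $\Psi(z, t) := (z, x_z(t))$ is real-analytic, and by the injectivity built into the admissibility conditions on $Z$, together with the transversality at $\partial M$ guaranteed by strict convexity, it is an injective real-analytic immersion onto $Z$. The real-analytic inverse function theorem then yields a real-analytic inverse, so $W_{\mathcal{A}} = \widetilde{W} \circ \Psi^{-1}$ is real-analytic on $Z$. The main obstacle I anticipate is the careful handling of the boundary values $t \in \{0, \tau_+(z)\}$, since $\tau_+$ need only be continuous on $\partial_+ SM$ and can fail to be smooth at the glancing set; this is resolved by working throughout on the open real-analytic extension $\widetilde{M}$, where $\Phi_t$, $\mathcal{A}$, and hence $\widetilde{W}$ are defined and real-analytic on an open neighborhood of the closed parameter region, and the desired analyticity on $Z$ then follows by restriction.
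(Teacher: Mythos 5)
Your proof is correct, but it takes a genuinely different route from the paper's. You reduce the lemma to the classical theorem that solutions of real-analytic ODEs depend real-analytically on time, initial data, and parameters (after extending the data to an open real-analytic ambient manifold $\widetilde{M}$ and handling the global statement by compactness). The paper instead uses standard ODE theory only to obtain smoothness of $W_{\mathcal{A}}$ and then proves analyticity by hand: writing $w_{ij}(t,z)=W_{\mathcal{A}}(z,x_z(t))$, it derives Cauchy estimates on all derivatives $\partial_t^p\partial_z^{\alpha}w_{ij}$ via a majorant computation combined with Gr\"onwall's inequality, yielding a bound of the form $\sum \frac{\rho^{p+|\alpha|}}{p!\,\alpha!}\,|\partial_t^p\partial_z^{\alpha}w_{ij}(t,z)|\le M e^{Lt}$ with $\rho, M, L$ uniform over a neighborhood in $\partial_+SM$ and over $t\in(0,\tau_+(z))$; this makes explicit that the radius of convergence does not degenerate along the finite-length geodesic, which your argument obtains instead from the black-box theorem plus compactness. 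Both are valid; yours is shorter but less self-contained. Two minor remarks: when you recast the system as the flow of an autonomous real-analytic vector field on $T\widetilde{M}\times\mathrm{GL}(N,\mathbb{C})$, note that $\mathcal{A}$ is a function on $Z\subset\mathcal{G}\times M$ and hence depends on the initial datum $z$ as well as on the current point $x_z(t)$, so the correct formulation is the parametrized one you also state (an ODE in $t$ with $z$ as analytic parameter, or equivalently an augmented system carrying $z$ as a frozen variable); and the passage back from the $(z,t)$-parametrization to $Z$ is precisely the identification $F(z,t)=(z,x_z(t))$ already used to define $Z$ as an embedded submanifold, so no separate inverse-function-theorem step is needed. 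Neither point affects correctness.
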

\begin{proof}
Since $\mathcal{A}$ is real-analytic on $Z$ and $M$ is a real-analytic Riemannian manifold, the geodesic $x_z(t)$ is real-analytic in $(z,t)$. Thus, \eqref{eq:transport-equation} is an initial value problem for a system of real-analytic ODEs in $t$ with real-analytic parameter $z$. Standard ODE theory implies that $W_{\mathcal{A}} \in C^\infty(Z)$. We now prove real-analyticity by establishing Cauchy estimates in local coordinates.

Let $[w_{ij}(t,z)] := W_{\mathcal{A}}(z,x_z(t))$ and $[a_{ij}(t,z)] := \mathcal{A}(z,x_z(t))$. Choose local coordinates $(z_1, \dots, z_{2n-2})$ near $z \in \partial_+SM$. In these coordinates, the system \eqref{eq:transport-equation} becomes
\begin{equation}\label{eq:transport-equation2}
\frac{\partial}{\partial t}w_{ij}(t,z) = -\sum_{k=1}^N a_{ik}(t,z)w_{kj}(t,z), \quad w_{ij}(0,z) = \delta_{ij}.
\end{equation}
We shall use the multi-index notation $\partial_z^\alpha := \partial_{z_1}^{\alpha_1} \cdots \partial_{z_{2n-2}}^{\alpha_{2n-2}}$ for partial derivatives with respect to $z$. 

Since $\mathcal A$ is real-analytic on $Z$ and the geodesic $(z,t)\mapsto x_z(t)$ is real-analytic (as $(M,g)$ is real-analytic), the function $a_{ij}(t,z) = \mathcal{A}(z, x_z(t))$ is real-analytic in $(t,z)$.
By the Cauchy--Kovalevskaya theorem (see for instance, \cite{Petrovsky:Notes}), for each $z \in \partial_+SM$, there exists a neighborhood $V \subset \partial_+SM$ and $\rho > 0$ such that
\begin{equation}\label{eq:int:Step_0}
    M := \sum_{p + |\alpha| = 0}^\infty \sum_{i,j=1}^N \frac{\rho^{p + |\alpha|}}{p! \alpha!} \left| \partial_t^p \partial_z^\alpha w_{ij}(0, z) \right| < \infty, \quad z \in V.
\end{equation}
If necessary, shrink $\rho$ and define
\begin{equation}\label{eq:int_step2}
    L := \sup_{z \in V} \sup_{t \in (0, \tau_+(z))} \sum_{p + |\alpha| = 0}^\infty \sum_{i,j=1}^N \frac{\rho^{p + |\alpha|}}{p! \alpha!} \left| \partial_t^p \partial_z^\alpha a_{ij}(t, z) \right| < \infty.
\end{equation}
Applying $\rho^{p + |\alpha|} \partial_t^p \partial_z^\alpha / (p! \alpha!)$ to \eqref{eq:transport-equation2}, we obtain the identity
\begin{align}\label{eq:int_step1}
    \begin{split}
        \frac{\partial}{\partial t} \left( \frac{\rho^{p + |\alpha|}}{p! \alpha!} \partial_t^p \partial_z^\alpha w_{ij}(t,z) \right)
&= -\sum_{k=1}^N \sum_{q \leq p} \sum_{\beta \leq \alpha}
\left( \frac{\rho^{p-q + |\alpha - \beta|}}{(p-q)! (\alpha - \beta)!} \partial_t^{p-q} \partial_z^{\alpha - \beta} a_{ik}(t,z) \right) \\
&\quad \times \left( \frac{\rho^{q + |\beta|}}{q! \beta!} \partial_t^q \partial_z^\beta w_{kj}(t,z) \right).
    \end{split}
\end{align}
This identity holds for all $t\in (0,\tau_+(z))$ since $w_{ij}$ and $(a_{ij})$ are smooth.

Define the partial sum 
\begin{align*}
    F_{\bar\nu}(t,z):=\sum_{p + |\alpha| = 0}^{\bar{\nu}} \sum_{i,j=1}^N \frac{\rho^{p + |\alpha|}}{p! \alpha!} \left| \partial_t^p \partial_z^\alpha w_{ij}(t,z) \right|.
\end{align*}
Since each $\partial_t^p \partial_z^\alpha w_{ij}(\cdot,z)$ is a smooth function of $t$, integrating \eqref{eq:int_step1} from $0$ to $t$ gives
\begin{align*}
    \frac{\rho^{p + |\alpha|}}{p! \alpha!} \partial_t^p \partial_z^\alpha w_{ij}(t,z)&=\frac{\rho^{p + |\alpha|}}{p! \alpha!} \partial_t^p \partial_z^\alpha w_{ij}(0,z)\\
    &\qquad -\int_0^t\sum_{k=1}^N \sum_{q \leq p} \sum_{\beta \leq \alpha}
\left( \frac{\rho^{p-q + |\alpha - \beta|}}{(p-q)! (\alpha - \beta)!} \partial_t^{p-q} \partial_z^{\alpha - \beta} a_{ik}(s,z) \right)\\
&\quad\quad\quad \times \left( \frac{\rho^{q + |\beta|}}{q! \beta!} \partial_t^q \partial_z^\beta w_{kj}(s,z) \right) ds.
\end{align*}
Taking absolute values and applying the triangle inequality,
\begin{align*}
    \frac{\rho^{p + |\alpha|}}{p! \alpha!} \bigl|\partial_t^p \partial_z^\alpha w_{ij}(t,z)\bigr|&\le \frac{\rho^{p + |\alpha|}}{p! \alpha!} \bigl|\partial_t^p \partial_z^\alpha w_{ij}(0,z)\bigr|\\
    &\qquad +\int_0^t\sum_{k=1}^N \sum_{q \leq p} \sum_{\beta \leq \alpha}
\left( \frac{\rho^{p-q + |\alpha - \beta|}}{(p-q)! (\alpha - \beta)!} \bigl|\partial_t^{p-q} \partial_z^{\alpha - \beta} a_{ik}(s,z)\bigr| \right)\\
&\quad\quad\quad \times \left( \frac{\rho^{q + |\beta|}}{q! \beta!} \bigl|\partial_t^q \partial_z^\beta w_{kj}(s,z) \bigr|\right) ds.
\end{align*}
Summing over $i,j\in \{1,\cdots , N\}$ and $0\le p + |\alpha|\le \bar{\nu} $, we obtain
\begin{align}\label{eq:int_step3}
    F_{\bar\nu}(t,z)\le F_{\bar\nu}(0,z)+\int_0^tS(s)ds,
\end{align}
where 
\begin{equation*}
    S(s):= \sum_{p + |\alpha| = 0}^{\bar{\nu}} \sum_{i,j,k=1}^N \sum_{q \leq p} \sum_{\beta \leq \alpha}\frac{\rho^{p-q + |\alpha - \beta|}}{(p-q)! (\alpha - \beta)!} \bigl|\partial_t^{p-q} \partial_z^{\alpha - \beta} a_{ik}(s,z)\bigr|\frac{\rho^{q + |\beta|}}{q! \beta!} \bigl|\partial_t^q \partial_z^\beta w_{kj}(s,z) \bigr|.
\end{equation*}
Substituting $r=p-q$ and $\varsigma=\alpha-\beta$ and using $p + |\alpha|\le \bar \nu$, we have $r+q+|\varsigma|+|\beta|\le \bar \nu$. This implies $r+|\varsigma|\le \bar \nu$ and $q+|\beta|\le \bar \nu$. Since all terms are nonnegative, we have
\begin{align*}
    S(s)\le \sum_{k=1}^N\left(\sum_{i=1}^N\sum_{r+|\varsigma|=0}^{\bar\nu}\frac{\rho^{r + |\varsigma|}}{r! \varsigma!} \bigl|\partial_t^{r} \partial_z^{\varsigma} a_{ik}(s,z)\bigr|\right)\left(\sum_{j=1}^N\sum_{q+|\beta|=0}^{\bar\nu}\frac{\rho^{q + |\beta|}}{q! \beta!} \bigl|\partial_t^q \partial_z^\beta w_{kj}(s,z) \bigr|\right).
\end{align*}
For each fixed $k$, the first factor is partial sum  of series in \eqref{eq:int_step2} and is therefore bounded by $L$. Therefore,
\begin{equation*}
    S(s)\le L \sum_{k,j=1}^N\sum_{q+|\beta|=0}^{\bar\nu}\frac{\rho^{q + |\beta|}}{q! \beta!} \bigl|\partial_t^q \partial_z^\beta w_{kj}(s,z) \bigr|=L F_{\bar\nu}(s,z).
\end{equation*}
Therefore from \eqref{eq:int:Step_0} and \eqref{eq:int_step3}, we have
\begin{equation}\label{eq:int:step_5}
     F_{\bar\nu}(t,z)\le M+L\int_0^tF_{\bar\nu}(s,z)ds,\qquad \qquad z\in V, t\in [0,\tau_+(z)].
\end{equation}
The function $F_{\bar\nu}(\cdot,z)$ is continuous and nonnegative on $[0,\tau_+(z)]$ and satisfies \eqref{eq:int:step_5} for all $t\in [0,\tau_+(z)]$. By the Gr\"onwall--Bellman inequality (see, for instance \cite[eq. (1.3)]{Gallegos:2025}), we have
\begin{equation}
    F_{\bar{\nu}}(t,z) \leq Me^{Lt},\qquad\qquad z\in V, t\in [0,\tau_+(z)].
\end{equation}
Since $\bar{\nu}$ is arbitrary and $M, L$ are independent of $\bar{\nu}$, the full series converges with an estimate
\[
\sum_{p + |\alpha| = 0}^\infty \sum_{i,j=1}^N \frac{\rho^{p + |\alpha|}}{p! \alpha!} \left| \partial_t^p \partial_z^\alpha w_{ij}(t,z) \right| \leq Me^{Lt},\qquad\qquad z\in V, t\in [0,\tau_+(z)].
\]
This proves that $W_{\mathcal{A}}$ is real-analytic on $Z$.
\end{proof}

We next state the local version of Lemma~\ref{lm:weight_analytic}, which may be proved similarly to Lemma~\ref{lm:weight_analytic}, but without establishing global Cauchy estimates. Thus, we omit the proof.
\begin{lemma}\label{lm:weight_analytic:local}
Let \((M, g)\) be a compact real-analytic Riemannian manifold of dimension \(n \geq 2\), and suppose that \(\partial M\) is strictly convex at a point \(x_0 \in \partial M\). Let $z_0 = (x_0, v_0) \in \partial_0 SM$, and let \(\mathcal{G}\) be a sufficiently small neighborhood of \(z_0\) in $\partial_+ SM$ such that all geodesics determined by $\mathcal{G}$ are non-trapping. Let \(\mathcal{A} \in C^\infty(\mathcal{G} \times M; \mathbb{C}^{N \times N})\) be a real-analytic matrix-valued function. For each \(z \in \mathcal{G} \setminus \partial_0 SM\), define \(W_{\mathcal{A}}\) as the solution to the first-order linear transport equation along the geodesic \(x_z\) (see \eqref{eq:transport-equation}).
Then \(W_{\mathcal{A}}\) is real-analytic on \(Z\) (see Definition~\ref{def:Z}).
\end{lemma}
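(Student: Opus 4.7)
The plan is to mirror the Cauchy-estimate argument used in Lemma~\ref{lm:weight_analytic}, but to carry it out locally around an arbitrary point of $Z$, so that no global uniformity in $z$ is needed. Since real-analyticity is a local property, it suffices to check at each base point $(z_*, x_{z_*}(t_*))\in Z$ with $z_*\in \mathcal{G}\setminus\partial_0 SM$ and $t_*\in(0,\tau_+(z_*))$ that $W_{\mathcal{A}}(z,x_z(t))$ is real-analytic in $(z,t)$ on some neighborhood of $(z_*,t_*)$; this is equivalent to real-analyticity of $W_{\mathcal{A}}$ on $Z$ near the corresponding point, because $(z,t)\mapsto (z,x_z(t))$ is a real-analytic local diffeomorphism away from the glancing set.

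First, I would fix such a pair $(z_*,t_*)$, introduce real-analytic coordinates $(z_1,\dots,z_{2n-2})$ on $\partial_+ SM$ near $z_*$, and appeal to real-analyticity of the geodesic flow to obtain an open set $V\times I$, with $z_*\in V\subset\mathcal{G}\setminus\partial_0 SM$ and $t_*\in I\Subset(0,\tau_+(z_*))$, on which $(z,t)\mapsto x_z(t)$ is real-analytic. Composition with the real-analytic attenuation $\mathcal{A}$ then makes $[a_{ij}(t,z)]:=\mathcal{A}(z,x_z(t))$ real-analytic on $V\times I$, and the transport equation assumes precisely the form~\eqref{eq:transport-equation2} used in the proof of Lemma~\ref{lm:weight_analytic}, now only on the compact cylinder $\overline V\times\overline I$ rather than globally.

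Next, after possibly shrinking $V$ and choosing $\rho>0$ small enough, the two local Cauchy-type constants
\[
M := \sup_{z\in V}\sum_{p+|\alpha|=0}^{\infty}\sum_{i,j=1}^{N}\frac{\rho^{p+|\alpha|}}{p!\,\alpha!}\bigl|\partial_t^p\partial_z^\alpha w_{ij}(0,z)\bigr|, \qquad L := \sup_{(t,z)\in I\times V}\sum_{p+|\alpha|=0}^{\infty}\sum_{i,j=1}^{N}\frac{\rho^{p+|\alpha|}}{p!\,\alpha!}\bigl|\partial_t^p\partial_z^\alpha a_{ij}(t,z)\bigr|
\]
are both finite by real-analyticity of the data on the compact set. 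Applying $\rho^{p+|\alpha|}\partial_t^p\partial_z^\alpha/(p!\,\alpha!)$ to~\eqref{eq:transport-equation2}, summing over $i,j$ and $p+|\alpha|\le\bar\nu$, and closing by Gr\"onwall's inequality exactly as in the proof of Lemma~\ref{lm:weight_analytic}, one obtains
\[
\sum_{p+|\alpha|=0}^{\bar\nu}\sum_{i,j=1}^{N}\frac{\rho^{p+|\alpha|}}{p!\,\alpha!}\bigl|\partial_t^p\partial_z^\alpha w_{ij}(t,z)\bigr|\;\le\; M e^{Lt},\qquad (z,t)\in V\times I,
\]
uniformly in $\bar\nu$. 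Sending $\bar\nu\to\infty$ gives convergence of the full Taylor series of $W_{\mathcal{A}}$ on a polydisc of radius $\rho$ around $(z_*,t_*)$, which is exactly the real-analyticity of $W_{\mathcal{A}}$ at $(z_*,x_{z_*}(t_*))$.

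The only technical point, and not a genuine obstacle, is the choice of neighborhoods: $V$ must stay clear of $\partial_0 SM$ so that the flow is smooth and the curves $x_z$ are admissible in the sense of Definition~\ref{def:admi_curve}, while $I$ must be bounded away from both endpoints of $[0,\tau_+(z_*)]$ so that $[0,t_*+\delta]\subset[0,\tau_+(z))$ for all $z\in V$ and the relevant analytic functions remain well-defined and bounded on $\overline V\times \overline I$. Both conditions are open, so every interior point of $Z$ admits such a $V\times I$, and the global Cauchy estimates of Lemma~\ref{lm:weight_analytic} are simply replaced by their local counterparts; the rest of the argument is verbatim.
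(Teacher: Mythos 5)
Your proposal is correct and follows exactly the route the paper intends: the paper omits this proof, stating only that it "may be proved similarly to Lemma~\ref{lm:weight_analytic}, but without establishing global Cauchy estimates," and your localization of the Cauchy--Gr\"onwall argument to a compact cylinder $\overline V\times\overline I$ around each interior point of $Z$ is precisely that. The details you supply (keeping $V$ away from $\partial_0 SM$, bounding $I$ away from the endpoints, and deducing analyticity from the uniform convergence of the majorant series) are all consistent with the global proof of Lemma~\ref{lm:weight_analytic}.
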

\begin{remark}\label{rm:higgs_reduction}
In defining the attenuated transform of $\bsf \in C^{\infty}(M, \mathbb{C}^n)$, we consider $\pi^*_M \bsf \in C^{\infty}(Z, \mathbb{C}^n)$ in Definition~\ref{def:attenuated}. With this identification, we have $I_{\mathcal{A}} \bsf = I_{\mathcal{A}}(\pi^*_M \bsf)$.
\end{remark}
\begin{proof}[Proof of Theorem~\ref{tm:local_attenuated}]
For each $z \in \partial_+ SM$, along the geodesic $x_z$, we compute that
\begin{align*}
\frac{d}{dt} W_{\mathcal{A}}^{-1} &= -W_{\mathcal{A}}^{-1} \left( \frac{d}{dt} W_{\mathcal{A}} \right) W_{\mathcal{A}}^{-1} = W_{\mathcal{A}}^{-1} \mathcal{A}.
\end{align*}
Also, along a fixed geodesic, one obtains
\begin{align*}
\frac{d}{dt} \left( W_{\mathcal{A}}^{-1} u^{\bsf} \right)
&= \left( \frac{d}{dt} W_{\mathcal{A}}^{-1} \right) u^{\bsf} + W_{\mathcal{A}}^{-1} \frac{d}{dt} u^{\bsf} \\
&= W_{\mathcal{A}}^{-1} \mathcal{A} u^{\bsf} - W_{\mathcal{A}}^{-1} \mathcal{A} u^{\bsf} - W_{\mathcal{A}}^{-1} \pi_M^* \bsf \\
&= -W_{\mathcal{A}}^{-1} \pi_M^* \bsf.
\end{align*}
This implies that
\begin{align*}
W_{\mathcal{A}}^{-1}(z, x_z(\tau_+(z))) &u^{\bsf}(z, x_z(\tau_+(z))) - W_{\mathcal{A}}^{-1}(z, \pi_M(z)) u^{\bsf}(z, \pi_M(z)) 
\\&= -\int_0^{\tau_+(z)} W_{\mathcal{A}}^{-1}(z, x_z(t)) \bsf(x_z(t)) \, dt.
\end{align*}
Since $u^{\bsf}(z, x_z(\tau_+(z))) = 0$, one has
\begin{align*}
u^{\bsf}(z, \pi_M(z)) = W_{\mathcal{A}}(z, \pi_M(z)) \int_0^{\tau_+(z)} W_{\mathcal{A}}^{-1}(z, x_z(t)) \bsf(x_z(t)) \, dt.
\end{align*}
Using the fact that $W_{\mathcal{A}}(z, \pi_M(z))=\operatorname{Id}$, we conclude 
\begin{align}
I_{\mathcal{A}} \bsf(z) = u^{\bsf}(z, \pi_M(z)) = \int_0^{\tau_+(z)} W_{\mathcal{A}}^{-1}(z, x_z(t)) \bsf(x_z(t)) \, dt, \qquad z \in \partial_+ SM.
\end{align}
By Lemma~\ref{lm:weight_analytic:local}, if $\mathcal{A}$ is real-analytic on $\mathcal{G} \times M$, then $W_{\mathcal{A}}$ is also real-analytic on $\mathcal{G} \times M$. Thus, the attenuated ray transform $I_{\mathcal{A}} \bsf$ is of the same form as the matrix weighted ray transform $R_W \bsf$ with real-analytic weight $W = W_{\mathcal{A}}^{-1}$. The result now follows directly from Theorem~\ref{tm:local:matrix_we}.
\end{proof}
The following lemma is the pseudo-linearization identity in a local setting. The proof is identical to that of \cite[Proposition~13.2.1]{GIP2D}, and hence omitted.

\begin{lemma}[Pseudo-linearization identity]\label{lm:pseudo_identity}
Let \((M, g)\) be a compact Riemannian manifold of dimension \(n \geq 2\). Suppose that \(\partial M\) is strictly convex at a point \(x_0 \in \partial M\). Let \(z_0 = (x_0, v_0) \in \partial_0 SM\), and let \(\mathcal{G}\) be a sufficiently small neighborhood of \(z_0\) in \(\partial_+ SM\) such that all geodesics determined by $\mathcal{G}$ are non-trapping. Let \(\mathcal{A}, \mathcal{B} \in C^{\infty}(\mathcal{G} \times M, \mathbb{C}^{N \times N})\) be matrix-valued attenuation functions. If $C_{\mathcal{A}}(z) = C_{\mathcal{B}}(z)$ for all $z \in \mathcal{G}$,
then
\[
I_{E(\mathcal{A}, \mathcal{B})}(\mathcal{A} - \mathcal{B})(z) = 0 \quad \text{for all } z \in \mathcal{G},
\]
where the map \(E(\mathcal{A}, \mathcal{B}) \colon M \to \operatorname{End}(\mathbb{C}^{N \times N})\) is defined by
\[
E(\mathcal{A}, \mathcal{B})(U) := \mathcal{A} U - U \mathcal{B}.
\]
\end{lemma}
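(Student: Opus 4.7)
The plan is to realize the matrix $I_{E(\mathcal{A},\mathcal{B})}(\mathcal{A}-\mathcal{B})(z)$ explicitly in terms of the fundamental matrix solutions $U_{\mathcal{A}}$ and $U_{\mathcal{B}}$ of the scattering problems~\eqref{eq:fundamental_matrix_solution} for $\mathcal{A}$ and $\mathcal{B}$, so that the hypothesis $C_{\mathcal{A}}(z)=C_{\mathcal{B}}(z)$ reads off the conclusion by pure algebra. Since each $z \in \mathcal{G}$ produces a non-trapping geodesic $x_z\colon[0,\tau_+(z)]\to M$ along which $\mathcal{A},\mathcal{B}$ are smooth, all transport ODEs below are classical and may be solved pointwise in $z$.

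The first step is to define the product $W(z,x_z(t)) := U_{\mathcal{A}}(z,x_z(t))\, U_{\mathcal{B}}(z,x_z(t))^{-1}$. Differentiating $U_{\mathcal{B}} U_{\mathcal{B}}^{-1} = \mathrm{Id}$ yields $\tfrac{d}{dt} U_{\mathcal{B}}^{-1} = U_{\mathcal{B}}^{-1} \mathcal{B}$, and combining this with~\eqref{eq:fundamental_matrix_solution} gives
\[
\tfrac{d}{dt} W = -\mathcal{A}\, U_{\mathcal{A}} U_{\mathcal{B}}^{-1} + U_{\mathcal{A}} U_{\mathcal{B}}^{-1}\, \mathcal{B} = -(\mathcal{A} W - W \mathcal{B}) = -E(\mathcal{A},\mathcal{B})(W),
\]
with terminal condition $W|_{t=\tau_+(z)}=\mathrm{Id}$ and boundary value $W|_{t=0} = C_{\mathcal{A}}(z)\,C_{\mathcal{B}}(z)^{-1}$. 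Thus $W$ is the fundamental matrix of the operator attenuation $E(\mathcal{A},\mathcal{B})$ acting on $\mathbb{C}^{N\times N}$-valued functions, normalized at the exit time.

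Next I would set $u(z,x_z(t)) := W(z,x_z(t)) - \mathrm{Id}$. Noting that $E(\mathcal{A},\mathcal{B})(\mathrm{Id}) = \mathcal{A}-\mathcal{B}$ and subtracting, one obtains
\[
\tfrac{d}{dt} u + E(\mathcal{A},\mathcal{B})(u) = -(\mathcal{A}-\mathcal{B}), \qquad u|_{t=\tau_+(z)}=0,
\]
which is exactly the transport problem of Definition~\ref{def:attenuated} with source $\mathcal{A}-\mathcal{B}$ and operator attenuation $E(\mathcal{A},\mathcal{B})$ (extended in the natural way from $\mathbb{C}^N$-valued unknowns with $N\times N$ attenuation to $\mathbb{C}^{N\times N}$-valued unknowns with $\operatorname{End}(\mathbb{C}^{N\times N})$-valued attenuation). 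Uniqueness for this linear ODE forces $u = u^{\mathcal{A}-\mathcal{B}}$, so
\[
I_{E(\mathcal{A},\mathcal{B})}(\mathcal{A}-\mathcal{B})(z) = u(z,\pi_M(z)) = W(z,x_z(0)) - \mathrm{Id} = C_{\mathcal{A}}(z) C_{\mathcal{B}}(z)^{-1} - \mathrm{Id},
\]
and the hypothesis $C_{\mathcal{A}}(z) = C_{\mathcal{B}}(z)$ on $\mathcal{G}$ makes the right-hand side vanish identically.

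The only real step of content is recognizing that $U_{\mathcal{A}} U_{\mathcal{B}}^{-1}$ is a fundamental matrix for the commutator-type attenuation $E(\mathcal{A},\mathcal{B})$; once that ansatz is in hand, the remainder of the argument is a one-line subtraction and boundary evaluation. Non-trapping of $\mathcal{G}$ is used only to guarantee finite integration times for the transport ODEs; no microlocal input or global geometric hypothesis is needed, in line with the claim that the argument is parallel to~\cite[Proposition~13.2.1]{GIP2D}.
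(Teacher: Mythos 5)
Your proof is correct and is essentially the argument the paper points to: the ansatz $W=U_{\mathcal{A}}U_{\mathcal{B}}^{-1}$ satisfying $\tfrac{d}{dt}W+E(\mathcal{A},\mathcal{B})(W)=0$ with $W|_{t=\tau_+(z)}=\mathrm{Id}$, followed by the subtraction $u=W-\mathrm{Id}$, yields the pseudo-linearization identity $I_{E(\mathcal{A},\mathcal{B})}(\mathcal{A}-\mathcal{B})(z)=C_{\mathcal{A}}(z)C_{\mathcal{B}}(z)^{-1}-\mathrm{Id}$, exactly as in the proof of \cite[Proposition~13.2.1]{GIP2D} which the paper cites and omits. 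The hypothesis $C_{\mathcal{A}}=C_{\mathcal{B}}$ on $\mathcal{G}$ then gives the conclusion, and your use of non-trapping only to ensure finite travel times matches the intended argument.
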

\begin{proof}[Proof of Corollary~\ref{tm:non_abelian}]
Without loss of generality, we may interpret the Higgs field \(\Phi \in C^{\infty}(M, \mathbb{C}^{N \times N})\) along a geodesic as an attenuation on \(Z\), by defining \(\Phi(x_z(t)) := \Phi(z, x_z(t))\). Since \(\Phi\) is real-analytic on \(M\), it is also real-analytic on \(Z\).

By Lemma~\ref{lm:pseudo_identity} and Remark~\ref{rm:higgs_reduction}, we obtain
\begin{equation}\label{eq:pseudo:eq}
    I_{E(\Phi, \Psi)}(\Phi - \Psi)(z) = 0, \qquad z \in \mathcal{G},
\end{equation}
where \(E(\Phi, \Psi)(U) := \Phi U - U \Psi\). Since \(\Phi\) and \(\Psi\) are real-analytic, it follows that \(E(\Phi, \Psi)\) is also real-analytic. By Lemma~\ref{lm:weight_analytic}, the associated weight \(W_{E(\Phi, \Psi)}\) is real-analytic as well. Applying Theorem~\ref{tm:local_attenuated} to the identity \eqref{eq:pseudo:eq}, we conclude that \(\Phi - \Psi = 0\) in a neighborhood of \(\pi_M(z_0)\) in $M$. Finally, by the identity theorem for real-analytic functions \cite[Corollary 1.2.5]{Krantz:Parks:2002}, it follows that \(\Phi = \Psi\) throughout \(M\).
\end{proof}
\subsection*{Acknowledgments} The work of H.C. was supported by the Grant-in-Aid for Scientific Research No.23K03186 of the Japan Society of Promotion of Science. The work of S.R.J. and J.R. was supported by the Research Council of Finland through the Flagship of Advanced Mathematics for Sensing, Imaging and Modelling (decision number 359183) and Jenny and Antti Wihuri Foundation. In addition, J.R. was supported by Emil Aaltonen Foundation and Fulbright Finland Foundation (ASLA-Fulbright Research Grant for Junior Scholars 2024--2025). We thank an anonymous referee for many helpful comments that improved the final manuscript.
\subsection*{Data availability statement} Data sharing not applicable to this article as no datasets were generated or analyzed during the current study.
\subsection*{Conflict of interest}
The authors declared no potential conflicts of interest with respect to the research, authorship, and/or publication of this article.

   \bibliography{math} 
	
	\bibliographystyle{alpha}

\end{document}